\author{Henri Martikainen and Tuomas Orponen}
\title[Jones' square function and $1$-rectifiability]{Boundedness of the density normalised Jones' square function does not imply $1$-rectifiability}
\address{University of Helsinki, Department of Mathematics and Statistics}
\subjclass[2010]{28A75 (Primary)}
\thanks{H.M. is supported by the Academy of Finland through the grant
Multiparameter dyadic harmonic analysis and probabilistic methods. T.O. is supported by the Academy of Finland through the grant Restricted families of projections and connections to
Kakeya type problems. Both authors are members of the Finnish Centre of Excellence in Analysis and Dynamics Research.}
\email{henri.martikainen@helsinki.fi,  tuomas.orponen@helsinki.fi}
\newcommand{\R}{\mathbb{R}}
\newcommand{\N}{\mathbb{N}}
\newcommand{\calD}{\mathcal{D}}
\newcommand{\calH}{\mathcal{H}}
\newcommand{\calQ}{\mathcal{Q}}
\newcommand{\spt}{\operatorname{spt}}
\newcommand{\diam}{\operatorname{diam}}
\newcommand{\card}{\operatorname{card}}
\numberwithin{equation}{section}
\theoremstyle{plain}
\newtheorem{thm}[equation]{Theorem}
\newtheorem{lemma}[equation]{Lemma}
\theoremstyle{definition}
\newtheorem{definition}[equation]{Definition}
\theoremstyle{remark}
\newtheorem{remark}[equation]{Remark}
\begin{document}

\begin{abstract} 
Recently, M. Badger and R. Schul proved that for a $1$-rectifiable Radon measure $\mu$, the density weighted Jones' square function
$$
J_{1}(x) = \mathop{\sum_{Q \in \calD}}_{\ell(Q) \leq 1} \beta_{2,\mu}^{2}(3Q)\frac{\ell(Q)}{\mu(Q)} 1_{Q}(x)
$$
is finite for $\mu$-a.e. $x$. Answering a question of Badger--Schul, we show that the converse is not true.
Given $\epsilon > 0$, we construct a Radon probability measure on $[0,1]^{2} \subset \mathbb{R}^{2}$ with the properties that $J_{1}(x) \leq \epsilon$ for all $x \in \operatorname{spt} \mu$, but nevertheless
the $1$-dimensional lower density of $\mu$ vanishes almost everywhere. In particular, $\mu$ is purely $1$-unrectifiable.
\end{abstract}

\maketitle

\section{Introduction}
A Radon measure $\mu$ in $\R^2$ is $1$-rectifiable if there exist countably many Lipschitz maps $f_i\colon \R \to \R^2$ such that
$$
\mu\Bigg( \R^2 \setminus \bigcup_i f_i(\R)\Bigg) = 0.
$$
Recent years have seen lively interest in attempting to characterise the rectifiability of general Radon measures in terms of \emph{$\beta$-numbers}, originally defined by P. Jones, G. David and S. Semmes. The existence of such a characterisation was conjectured by P. Jones around 2000. We start by mentioning a three-paper series of M. Badger and R. Schul \cite{BS1,BS2, BS3}, where the authors study the connection between $1$-rectifiability and the boundedness of certain square functions, usually nicknamed \emph{Jones' square functions}. A natural example of these objects is the following function $J_{1} := J_{1,\mu}$, the \emph{density normalised Jones' square function}:
$$
J_1(x) := \mathop{ \sum_{Q \in \calD}}_{\ell(Q) \le 1} \beta_{2,\mu}^{2}(3Q)\frac{\ell(Q)}{\mu(Q)} 1_{Q}(x), \quad x \in \R^{2},
$$
where $\calD$ is the standard dyadic grid in $\R^2$, $\mu$ is a Radon measure, and the $\beta$-numbers are defined as follows.
\begin{definition}[$\beta$-numbers]\label{betas}
Let $\mu$ be a Radon measure on $\R^{2}$. For a square $Q \subset \R^{2}$ with $\mu(Q) > 0$, we define the number $\beta_{2,\mu}(Q)$ by
$$
\beta_{2,\mu}(Q) = \inf_{L} \left[ \frac{1}{\mu(Q)} \int_{Q} \left(\frac{d(y,L)}{\diam(Q)} \right)^{2} \, d\mu(y) \right]^{1/2},
$$
where the $\inf$ is taken over all (affine) lines $L \subset \R^{2}$.
\end{definition}
\noindent This $L^{2}$ definition of $\beta$-numbers is due to G. David and S. Semmes \cite{DS1,DS3}, and the
definition of the density normalised Jones' square function $J_1$ appears in the papers by Badger--Schul \cite{BS1,BS2, BS3}.

The validation for $J_{1}$ is, no doubt, the following theorem of Badger--Schul \cite{BS1}: if $\mu$ is $1$-rectifiable, then
$$
J_1(x) < \infty
$$
for $\mu$-a.e. $x$. This indicates that the pointwise $\mu$-a.e. boundedness of $J_{1}$ could, potentially, characterise the $1$-rectifiability of a general Radon mesure $\mu$, in the spirit of Jones' conjecture; in fact, Badger and Schul manage to prove this in \cite{BS3} under the assumption that $\mu$ is pointwise doubling. In the present paper, we disprove the conjecture for general measures: the pointwise boundedness of $J_{1}$ does not imply $1$-rectifiability. In fact, the boundedness of $J_{1}$ does not even imply that $\mu$ has non-vanishing $1$-dimensional lower density
$$
\Theta_{\ast}^{1}(\mu,x) := \liminf_{r \to 0} \frac{\mu(B(x,r))}{2r}
$$
in a set of positive measure. Here is the precise statement:
\begin{thm}\label{main2} Given $\epsilon > 0$, there exists a Radon probability measure $\mu$ supported on $[0,1]^{2} \subset \R^{2}$ with the following properties:
\begin{enumerate}
\item $J_{1}(x) \leq \epsilon$ for all $x \in \spt \mu$.
\item $\Theta_{\ast}^{1}(\mu,x) = 0$ for $\mu$-a.e. $x$.
\end{enumerate}
In particular, $\mu$ is purely $1$-unrectifiable.
\end{thm}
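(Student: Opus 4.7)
The plan is to construct $\mu$ as the weak-$*$ limit of the uniform probability measures supported on the generations of a horizontally-stretched, non-self-similar four-corner Cantor construction. I would begin with a very thin initial rectangle $[0,1] \times [0, H_0]$, where $H_0 = H_0(\epsilon)$ is a small parameter to be tuned, and at each step $k \geq 1$ replace every surviving rectangle of dimensions $W_{k-1} \times H_{k-1}$ by four equal-mass children of dimensions $\sigma^H_k W_{k-1} \times \sigma^V_k H_{k-1}$ placed at its four corners. The crucial parameter choice is to pick each $\sigma^H_k$ slightly greater than $1/4$, so that $\prod_j (4\sigma^H_j) \to \infty$ and the density eventually vanishes, while picking each $\sigma^V_k$ much smaller than $\sigma^H_k$, so that the aspect ratios $h_k := H_k / W_k$ shrink geometrically and the $\beta$-numbers remain tiny.

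For the $J_1$-estimate the main point is that, at the natural scale $\ell(Q) \asymp W_{k-1}$, a cube $Q$ whose triple $3Q$ contains a parent rectangle sees, to leading order, four point-masses concentrated at the corners of that rectangle. Minimising over lines gives the horizontal mid-line as the optimal $L$, and a short direct computation yields $\beta_{2,\mu}^2(3Q) \lesssim h_{k-1}^2$. Combined with the density-normalising factor $\ell(Q)/\mu(Q) \asymp \prod_{j < k}(4 \sigma^H_j)$, the contribution from scale $W_{k-1}$ becomes
$$
H_0^2 \prod_{j < k} \left(\frac{\sigma^V_j}{\sigma^H_j}\right)^2 \cdot 4 \sigma^H_j = H_0^2 \prod_{j < k} b_j, \qquad b_j := \left(\frac{\sigma^V_j}{\sigma^H_j}\right)^2 \cdot 4 \sigma^H_j.
$$
Choosing the ratios $\sigma^V_j/\sigma^H_j$ so that $b_j \leq c < 1$ uniformly makes the total a convergent geometric series bounded by $H_0^2/(1-c)$, and taking $H_0$ small enough forces $J_1(x) \leq \epsilon$ for every $x \in \spt \mu$.

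For the density estimate, at the sub-sequence of scales $r = W_k/2$ the ball $B(x,r)$ around any $x \in \spt \mu$ contains only the level-$k$ rectangle of $x$, whose mass is $4^{-k}$, whence
$$
\frac{\mu(B(x,r))}{2r} \lesssim \frac{4^{-k}}{W_k} = \prod_{j \leq k}(4 \sigma^H_j)^{-1} \to 0,
$$
using $4 \sigma^H_j > 1$. Pure $1$-unrectifiability then follows from the classical fact that $1$-rectifiable Radon measures have strictly positive $1$-dimensional lower density $\mu$-a.e.

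The main obstacle I anticipate is to extend the $\beta$-estimate in the $J_1$ bound \emph{uniformly} to all dyadic cubes $Q \in \calD$ with $x \in Q$ and $\ell(Q) \leq 1$, not merely those aligned with the natural scales $W_{k-1}$. At intermediate scales between $W_k$ and $W_{k-1}$, and for cubes not centred on a parent rectangle, one must verify that $\beta_{2,\mu}^2(3Q) \lesssim h_{k-1}^2$ still holds with constants that do not destroy the geometric decay established above. A scale-by-scale case analysis, together with possibly a harmless modification of the construction that aligns the rectangles with $\calD$, is where I expect the technical bulk of the proof to lie.
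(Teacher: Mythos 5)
Your density estimate is the fatal step, and the problem is caused precisely by the flatness you impose to control the $\beta$-numbers. You claim that $B(x, W_k/2)$ ``contains only the level-$k$ rectangle of $x$, whose mass is $4^{-k}$'', but this is false when $h_{k-1} \ll 1$: the nearest sibling of $x$'s level-$j$ rectangle is its \emph{vertically aligned} twin at distance $\approx H_{j-1}$, and $H_{j-1}\ll W_k$ for a whole range $j_0 \le j \le k$ of indices (where $j_0$ is determined by $H_{j_0-1} \approx W_k$, and $j_0 < k$ because $H_{k-1}\ll W_k$). Each such sibling has $4^{-j}$ mass, of which a horizontal strip of width $W_k$ catches about $4^{-j}2^{-(k-j)} = 2^{-j-k}$. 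Summing over $j \in [j_0,k]$ gives $\mu(B(x,W_k/2)) \gtrsim 2^{-j_0-k}$, not $4^{-k}$, so the density at scale $W_k/2$ is of order $2^{k-j_0}\prod_{i\le k}(4\sigma^H_i)^{-1}$. Since $\sigma^H_i$ is only slightly above $1/4$, the product is subexponential while $2^{k-j_0}$ grows exponentially; the density \emph{blows up}. Quantitatively, the local dimension of your measure is
\[
\frac{\log 2}{\log(1/\sigma^H)} + \frac{\log 2}{\log(1/\sigma^V)},
\]
and for $\sigma^V \ll \sigma^H < 1/2$ this is strictly below $1$, forcing $\Theta^1_\ast(\mu,x)=\infty$ everywhere on $\spt\mu$, the opposite of (ii). There is no choice of parameters that fixes this: pushing the local dimension to $1$ or beyond requires $\sigma^V_j$ comparable to $\sigma^H_j$, at which point the aspect ratios $h_k$ no longer decay and the $\beta$-numbers are bounded below (indeed, already in the regime you propose, at scales $\ell(Q)\approx H_{k-1}$ the triple $3Q$ sees two vertically-stacked, equal-mass pieces and $\beta_{2,\mu}^2(3Q)$ is of order one, not $h_{k-1}^2$).

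This tension is exactly what the paper's construction is designed to evade, and the mechanism is fundamentally different from a four-corner carpet. Each generation replaces a segment by one \emph{heavy} horizontal segment $H$ (carrying almost all the mass, with linear density $\Theta \sim m/r_{n,0}$) plus many \emph{thin} parallel segments $E_j$ of linear density $\epsilon/2^n \to 0$. Almost every point lies on a thin segment at infinitely many generations, and at the right scale the ball only sees that thin segment, giving $\Theta^1_\ast = 0$. The $J_1$ estimate survives because the measure is asymmetric: whenever $3Q$ meets both $H$ and an $E_j$, the competitor line through $H$ captures almost all the mass, and $\beta_{2,\mu}^2(3Q)$ is controlled by the tiny mass ratio $\mu(E_j\cap 3Q)/\mu(3Q)$ rather than by the geometry alone, while the density factor $\ell(Q)/\mu(Q)$ stays bounded because $Q$ contains a chunk of the heavy line. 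Your symmetric, equal-mass construction has no heavy line to lean on, which is why it cannot satisfy both (i) and (ii) simultaneously.
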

The fact that (ii) implies pure $1$-unrectifiability follows from Lemma 2.7 in Badger and Schul's paper \cite{BS1}, which states that $1$-rectifiable measures have positive lower $1$-density almost everywhere.   

We mention a few further developments. For general measures, Badger and Schul \cite{BS3} were able to get a positive result by considering a somewhat larger square function $\tilde{J}_{1}$,
where the only difference to $J_{1}$ is that the $\beta$-numbers are replaced by certain larger versions. If one makes the \emph{a priori} assumption $\mu \ll \calH^{1}$, then a full characterisation of rectifiability is available, thanks to Tolsa \cite{To1} and Azzam-Tolsa \cite{AT1}: in their theorem, Azzam and Tolsa consider a variant of $J_{1}$, which works precisely in the case $\mu \ll \calH^{1}$ but has no chance to characterise the $1$-rectifiability of general Radon measures.

The proof Theorem \ref{main2} is rather technical, so for the reader's convenience we first prove the following simpler version:
\begin{thm}\label{main} Given $\epsilon > 0$, there exists a Radon probability measure $\mu$ supported on $[0,1]^{2} \subset \R^{2}$ with the following properties:
\begin{itemize}
\item[(i)] The $\beta$-numbers associated with $\mu$ satisfy
\begin{displaymath}  \sum_{Q \in \calD} \beta_{2,\mu}^{2}(3Q)\frac{\ell(Q)}{\mu(Q)} 1_{Q}(x) \leq \epsilon, \quad x \in \spt \mu.  \end{displaymath}
\item[(ii)] If $\Gamma$ is any $\calH^{1}$-measurable set with $\calH^{1}(\Gamma) \leq 1$, we have
\begin{displaymath} \mu(\Gamma) \leq \epsilon. \end{displaymath}
\end{itemize}
\end{thm}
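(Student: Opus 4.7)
The plan is to construct $\mu$ as a weak-$\ast$ limit of probability measures $\mu_{k}$, each of the form $\mu_{k} = \calH^{1}|_{\Sigma_{k}}/L_{k}$, where $\Sigma_{k} \subset [0,1]^{2}$ is a finite union of line segments of total $\calH^{1}$-length $L_{k}$. Property (ii) is then built into the approximants: for any $\calH^{1}$-measurable $\Gamma$ with $\calH^{1}(\Gamma) \leq 1$ we have
\[
\mu_{k}(\Gamma) \leq \calH^{1}(\Gamma \cap \Sigma_{k})/L_{k} \leq 1/L_{k},
\]
so once $L_{k} \geq 1/\epsilon$, the bound $\leq \epsilon$ holds. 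A standard weak-$\ast$/outer regularity argument would then transfer this bound to the limit $\mu$.

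The iteration $\Sigma_{k} \rightsquigarrow \Sigma_{k+1}$ is the core of the construction. I would lengthen $\Sigma_{k}$ by locally replacing short portions of its segments with slight zigzag (or other small-amplitude) perturbations of transverse amplitude $\delta_{k} \ell_{k}$ at a designated dyadic scale $\ell_{k}$. Such perturbations produce $\beta$-number contributions of order $\delta_{k}^{2}$ concentrated near scale $\ell_{k}$, with essentially negligible contributions at larger scales (where the perturbation is a small bump on a long segment) and at smaller scales (where one sits on a single tilted sub-segment). The lengthening factor drives $L_{k} \to \infty$.

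The pointwise estimate on the density-weighted Jones' sum is the key computation. At a dyadic cube $Q$ with side $\ell(Q) \approx \ell_{k}$ intersecting a generation-$k$ perturbation, one expects $\beta_{2,\mu}^{2}(3Q) \lesssim \delta_{k}^{2}$ and $\ell(Q)/\mu(Q) \approx L_{k}$, yielding a per-generation contribution $\lesssim \delta_{k}^{2} L_{k}$ to the sum. The parameters must therefore satisfy $\sum_{k} \delta_{k}^{2} L_{k} \leq \epsilon$ while $L_{k} \to \infty$. Since a naive global zigzag lengthening by factor $1 + c$ forces amplitude $\delta \gtrsim \sqrt{c}$, so that the contribution per step telescopes to the total length increase, a uniform lengthening at every generation is forbidden. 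The resolution is to lengthen only a small fraction of $\Sigma_{k}$ at each step, so that each point of $\spt \mu$ is affected by a perturbation at only finitely many (ideally one) generations, decoupling the single-scale $\beta$-estimate from the cumulative length growth.

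The principal obstacle is this careful balancing of lengthening and perturbation scales, combined with the multi-scale bookkeeping needed to control $\beta_{2,\mu}^{2}(3Q)$ at transitional scales between generations, at cubes not directly containing a perturbation, and at scales much larger than $\ell_{k}$ where the residual influence of many accumulated generations must still be summable. A secondary but standard task is the weak-$\ast$ passage of (i), using lower semicontinuity of the $\beta$-numbers along weakly convergent measures, and of (ii), using outer regularity of $\mu$ on $\calH^{1}$-measurable sets.
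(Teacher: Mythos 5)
Your proposal takes a genuinely different route from the paper, and unfortunately the central design decision is fatal: you work exclusively with uniform-density measures $\mu_{k} = \calH^{1}|_{\Sigma_{k}}/L_{k}$, whereas the paper's construction relies crucially on a highly \emph{non-uniform} density, and this asymmetry is exactly what makes the density-normalised Jones sum controllable.

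The paper's measure for Theorem~\ref{main} is not iterative at all; it is built in one step inside a tiny square $[0,r_{0}]^{2}$ with $r_{0} = c\epsilon^{3}$. It consists of $N \sim 1/(\epsilon r_{0})$ \emph{light} horizontal segments $E_{j} = [0,r_{0}]\times\{r_{j}\}$ of density $\epsilon$, with $r_{j}\downarrow 0$ super-rapidly, together with a single \emph{heavy} spine $H = [0,r_{0}]\times\{0\}$ of density $\Theta = \epsilon/r_{0} \sim \epsilon^{-2}$. Property (ii) comes from the light density $\epsilon$ on the long part and the tiny length $r_{0}$ of the heavy part. Property (i) comes from the fact that every relevant dyadic square $Q$ has its lower edge on $H$, so $\mu(Q) \gtrsim \Theta\ell(Q)$, giving $\ell(Q)/\mu(Q) \lesssim r_{0}/\epsilon = c\epsilon^{2}$, while the rapid decay of the $r_{j}$ makes the $\beta$-numbers small. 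The heavy spine is what kills the density normalisation.

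Your uniform-density scheme cannot reproduce this and, as you yourself begin to notice, is actually obstructed. With $\mu_{k} = \calH^{1}|_{\Sigma_{k}}/L_{k}$, at the scales where $\Sigma_{k}$ passes once through $Q$ you have $\ell(Q)/\mu(Q)\approx L_{k}$, and in the weak limit this becomes $\approx L_{\infty}$. A generation-$k$ zigzag of amplitude $\delta_{k}$ then contributes $\approx \delta_{k}^{2}L_{\infty}$ to the Jones sum of every affected point, forcing $\delta_{k}^{2} \leq \epsilon/L_{\infty}$. But such a zigzag lengthens the perturbed piece by only a factor $1 + O(\delta_{k}^{2}) \leq 1 + O(\epsilon/L_{\infty})$. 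Even in your best case (each point perturbed at exactly one generation, with the perturbed portions disjoint), the total added length over all generations is $\lesssim \sum_{k}\alpha_{k}L_{k}\cdot(\epsilon/L_{\infty}) \lesssim \epsilon$, since $\sum_{k}\alpha_{k}L_{k}\leq L_{\infty}$. So $L_{\infty}$ cannot rise from $O(1)$ to $1/\epsilon$ and property (ii) fails. Allowing each point to be hit at many generations is worse: then the Jones sum at that point is $\gtrsim L_{\infty}\sum\delta^{2} \gtrsim L_{\infty}\log(L_{\infty}/L_{0})$, which is enormous once $L_{\infty}\geq 1/\epsilon$. This is precisely the Tolsa/Azzam--Tolsa heuristic that makes the density-normalised Jones functional control length when the density is uniform; it is the thing your construction must circumvent, and no choice of zigzag parameters does it. To make the estimate checkable, try the uniform-density version of the paper's picture, $\nu = \calH^{1}|_{\Sigma}/L$ with $\Sigma = H\cup\bigcup E_{j}$: at scale $\ell(Q)\approx r_{N}$ you get $\mu(Q)\approx 2\ell(Q)/L$ and $\beta_{2,\mu}^{2}(3Q)\sim 1$, so the Jones term is $\sim L \sim 1/\epsilon$, not $\epsilon$. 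The heavy spine is doing genuine work.

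A secondary point: passing (i) to a weak-$\ast$ limit is not the routine step you suggest. The map $\mu\mapsto\beta_{2,\mu}^{2}(3Q)$ is not lower semicontinuous (it involves $1/\mu(3Q)$), and the extra factor $\ell(Q)/\mu(Q)$ behaves still worse. The paper sidesteps this entirely in Theorem~\ref{main} by making $\mu$ a single explicit finite combination of weighted segment measures, so that all the estimates are checked directly rather than transported through a limit.
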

This already shows that the condition (i) is not sufficient for a traveling salesman type theorem for general measures. 
Such a result was obtained by Badger--Schul \cite{BS3} by replacing the $\beta$-numbers in (i) by some enlarged $\beta$-numbers called $\tilde \beta$.
Their result says that if
\begin{equation*} \tilde \beta(\mu)^{2} := \sum_{Q \in \calD} \tilde \beta_{2,\mu} (3Q)^{2} \ell(Q) < \infty, \end{equation*}
then $\mu$ almost all of $\R^{2}$ can be covered by a single curve $\Gamma$, whose length respects the upper bound $\calH^{1}(\Gamma) \lesssim \diam \spt \mu + \tilde \beta_{2}(\mu)^{2}$.
Theorem \ref{main} implies that this fails if $\tilde \beta(\mu)^2$ is replaced with
$$
 \beta(\mu)^{2} := \sum_{Q \in \calD} \beta_{2,\mu} (3Q)^{2} \ell(Q).
$$
For the original Jones' traveling salesman for compact sets, see \cite{Jo1}. Lerman \cite{Le1} has considered questions related to this paper for non-density normalised variants of $J_{1}$. In this setting, he has examples similar to Theorem \ref{main}, see Section 5 of \cite{Le1}. 

As the reader will note, the proofs of Theorems \ref{main} and \ref{main2} make good use of the special properties of dyadic squares. It is fair to ask, whether such dyadic magic is actually necessary for the examples to work. Our third result should alleviate these concerns: it is a version of Theorem \ref{main}, where the dyadic square $J_{1}$ is replaced by a continuous analogue. It is very likely that Theorem \ref{main2} could also be adapted to this setting, but the technical details are, no doubt, easier to handle in the dyadic world.
\begin{thm}\label{main3} Given $\epsilon > 0$, there exists a Radon probability measure $\mu$ supported on $[0,1]^{2} \subset \R^{2}$ with the following properties:
\begin{itemize}
\item[(i)] The $\beta$-numbers associated with $\mu$ satisfy
\begin{displaymath} \int_{0}^{\infty} \beta^{2}_{2,\mu}(B(x,r)) \frac{dr}{\mu(B(x,r))} \leq \epsilon \quad \text{for all } x \in \spt \mu.  \end{displaymath}
\item[(ii)] If $\Gamma$ is any $\calH^{1}$-measurable set with $\calH^{1}(\Gamma) \leq 1$, we have
\begin{displaymath} \mu(\Gamma) \leq \epsilon. \end{displaymath}
\end{itemize}
\end{thm}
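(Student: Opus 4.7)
The plan is to reuse the Cantor-like measure $\mu$ from Theorem \ref{main}, after a mild bookkeeping adjustment to the construction, and then reduce the continuous $\beta^{2}/\mu$ integral to the dyadic sum already bounded in Theorem \ref{main}(i). The required adjustment is a modest doubling property: for every $x \in \spt \mu$ and every dyadic cube $Q \ni x$,
$$\mu(3Q) \leq C \mu\bigl(B(x, \tfrac{1}{2}\ell(Q))\bigr)$$
with an absolute constant $C$. This is natural in a self-similar construction and is arranged by placing cluster centers well inside each parent cell and keeping atom spacings uniform; crucially, it does not affect the $\beta$-sum bound or the lower-density argument inherited from Theorem \ref{main}, so both of its conclusions remain valid for the adjusted measure.

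Granting this, the continuous bound follows by a standard dyadic-to-continuous comparison. Fix $x \in \spt \mu$ and $r > 0$; let $k \in \Z$ be the integer with $r \in [2^{-k-1}, 2^{-k}]$ and let $Q = Q_{k}(x) \in \calD$ be the unique dyadic cube of side $2^{-k}$ containing $x$. Then $B(x,r) \subset 3Q$, and taking $L$ to be the line optimizing $\beta_{2,\mu}(3Q)$,
$$\beta_{2,\mu}^{2}(B(x,r)) \leq \frac{1}{\mu(B(x,r))\, r^{2}} \int_{3Q} d(y,L)^{2}\, d\mu \lesssim \frac{\mu(3Q)}{\mu(B(x,r))}\, \beta_{2,\mu}^{2}(3Q).$$
Dividing by $\mu(B(x,r))$, using the doubling estimate to absorb $\mu(3Q)/\mu(B(x,r))$ into an absolute constant, and integrating $r$ over $[2^{-k-1}, 2^{-k}]$ gives
$$\int_{2^{-k-1}}^{2^{-k}} \beta_{2,\mu}^{2}(B(x,r)) \frac{dr}{\mu(B(x,r))} \lesssim \beta_{2,\mu}^{2}(3Q) \cdot \frac{\ell(Q)}{\mu(Q)}.$$
Summing over all $k \in \Z$ recovers, up to an absolute factor, the dyadic sum from Theorem \ref{main}(i), which is at most $\epsilon$. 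Absorbing the implied constant into $\epsilon$ yields (i); condition (ii) is literally the conclusion (ii) of Theorem \ref{main}.

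The main obstacle is establishing the doubling estimate: it is not automatic in the Theorem \ref{main} construction, but should follow from a careful choice of cluster positions and spacings at each inductive stage, essentially by keeping all mass well away from the boundary of each parent cell. Once this is in place, the passage from dyadic to continuous is essentially free and justifies the remark in the introduction that the dyadic setting is merely a matter of convenience.
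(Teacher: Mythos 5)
The central premise of your reduction does not hold: the measure from Theorem \ref{main} is \emph{not} doubling in the sense you require, and the non-doubling is a structural feature of the construction rather than a bookkeeping artifact. Concretely, take $x \in E_{j}$ and the dyadic cube $Q$ of side $\ell(Q) = r_{j}$ containing $x$. Since the line $E_{j}$ sits at height $r_{j}$, the cube $3Q$ reaches down to $\R \times \{0\}$ and picks up a chunk of $H$, giving $\mu(3Q) \gtrsim \Theta r_{j} = \epsilon r_{j}/r_{0}$. By contrast, $B(x, \ell(Q)/2) = B(x,r_{j}/2)$ stays strictly above $H$ (distance $r_{j}$) and strictly above $E_{j+1}$ (distance $r_{j} - r_{j+1} > r_{j}/2$ when the sequence decays rapidly), so it meets only $E_{j}$ and $\mu(B(x,r_{j}/2)) \sim \epsilon r_{j}$. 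Hence $\mu(3Q)/\mu(B(x,\ell(Q)/2)) \sim 1/r_{0} = 1/(c\epsilon^{3})$, which is not an absolute constant and in fact blows up as $\epsilon \to 0$. This wrecks the estimate exactly in the range where it matters: after integrating your pointwise bound over $r \in [\ell(Q)/2, \ell(Q)]$, the continuous contribution near scale $r_{j}$ exceeds the dyadic term $\beta_{2,\mu}^{2}(3Q)\ell(Q)/\mu(Q)$ by a factor of order $1/r_{0}$, so the dyadic bound $\lesssim \epsilon$ yields nothing useful. Nor can this be fixed by ``placing mass well inside cells'': the mass disparity between the $E_{j}$'s (weight $\epsilon$) and $H$ (weight $\Theta = \epsilon/r_{0}$) is precisely what keeps the $\beta$-numbers small, so any modification restoring doubling would need an entirely new verification of (i) --- which is the whole theorem.

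The paper's proof of Theorem \ref{main3} instead estimates the continuous integral directly. The delicate range is $r$ just above the distance $r_{j_{0}}$ from $x \in E_{j_{0}}$ to $H$: there the ball grazes $H$ along a short chord, and the paper uses the chord-length formula $\calH^{1}(B(x,r) \cap H) \sim \sqrt{r}\sqrt{r - r_{j_{0}}}$ to get a lower bound on $\mu(B(x,r))$ that degrades gracefully as $r \downarrow r_{j_{0}}$. The first sliver $r \in [r_{j_{0}} - r_{j_{0}+1},\, r_{j_{0}} - r_{j_{0}+1} + (\epsilon')^{2} r_{j_{0}}]$ is handled with the trivial bound $\beta_{2,\mu}^{2} \leq 1$, which suffices because the interval is short; past that, the chord bound gives $\mu(B(x,r)) \gtrsim (\epsilon')^{2} r / r_{0}$ and the integral closes. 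This phenomenon --- the ball barely touching $H$ --- is invisible to a cube-based comparison, and it is exactly where your doubling assumption fails. You would need to either reproduce this chord analysis or find another way to control the transition scales $r \approx r_{j}$; the reduction to Theorem \ref{main}(i) as written does not go through.
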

Above, the $\beta$-numbers associated to balls are defined simply by replacing all occurrences of $Q$ by $B(x,r)$ in Definition \ref{betas}.

\subsection{Notation} In this paper, $B(x,r)$ stands for an open ball of radius $r > 0$ centred at $x \in \R^{2}$. If $Q$ is a dyadic square, the notation $3Q$ stands for the square, which has the same centre as $Q$ but is dilated by a factor of three; in other words, $3Q$ is the union of $Q$ with its eight dyadic neighbours. For non-negative real numbers $A,B$, the notation $A \lesssim B$ means that $A \leq CB$ for some absolute constant $C \geq 1$. The two-sided inequality $A \lesssim B \lesssim A$ is abbreviated to $A \sim B$. 

\section{Proof of Theorem \ref{main}}
In this section we consider the proof of the simpler version of our main Theorem \ref{main2}, namely Theorem \ref{main}.
This shows some of the essential ideas, but avoids the technicalities imposed by the iterative construction needed in Theorem \ref{main2}.

We start with the definition of an auxiliary measure inside a fixed dyadic square.
\begin{definition}\label{auxMeasure} Let $1 \geq r_{0} > r_{1} > \ldots > r_{N} > 0$ be a finite sequence of dyadic radii, let $m > 0$ be a "mass", and let $\delta > 0$ be a "density". Assume that $N \leq m/(\delta r_{0})$. We define a measure $\nu = \nu((r_{j}),m,\delta)$ supported on $[0,r_{0}]^{2}$ as follows:
\begin{displaymath} \nu := \sum_{j = 1}^{N} \delta \cdot \calH^{1}|_{E_{j}} + \Theta \cdot \calH^{1}|_{H}. \end{displaymath}
Here $E_{j} = [0,r_{0}] \times \{r_{j}\}$ for $1 \leq j \leq N$, and $H = [0,r_{0}] \times \{0\}$. The number $\Theta \geq 0$ solves the equation
\begin{displaymath} N\delta r_{0} + \Theta r_{0} = m. \end{displaymath}
In particular, $\nu(\R^{2}) = m$.  \end{definition}

We set
$$
\mu := \nu((r_{j})_{j=0}^N,1,\epsilon),
$$
where $(r_{j})_{j = 0}^{N}$ is a suitable sequence of dyadic radii to be specified later, with
$$
r_{0} = c\epsilon^{3} \quad \text{and} \quad N = \frac{1-\epsilon}{\epsilon r_{0}}
$$
for some small constant $c > 0$. For simplicity, we assume $N$ to be an integer (otherwise take the ceiling function).

We first verify (ii) (with $2\epsilon$ instead of $\epsilon$). Suppose that $\Gamma$ satisfies $\calH^{1}(\Gamma) \leq 1$. Then,
\begin{align*}
\mu(\Gamma) \le \epsilon\calH^{1}(\Gamma) + \Theta \calH^{1}(H) \le 2\epsilon.
\end{align*}

It remains to show (i), that is to show that
\begin{displaymath} \sum_{Q \in \calD} \beta_{2,\mu}^{2}(3Q) \frac{\ell(Q)}{\mu(Q)} 1_{Q}(x) \le \epsilon, \qquad x \in \spt \mu. \end{displaymath}
We begin by treating the case where $x \in E_{j}$ for some $1 \leq j \leq N$. Let $Q \in \calD$ be a dyadic square containing $x$. The three essential cases of the proof are illustrated in Figure \ref{fig3}.
\begin{figure}[h!]
\begin{center}
\includegraphics[scale = 0.8]{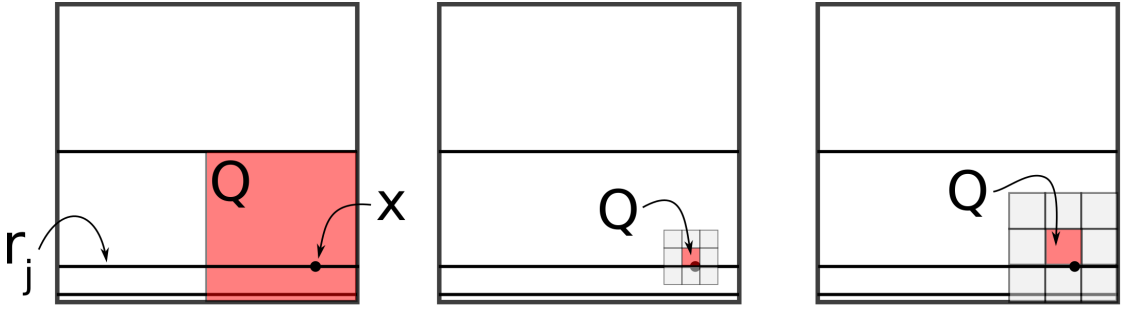}
\caption{The three cases: $\ell(Q) > r_{j}$, $\ell(Q) < r_{j}$, or $\ell(Q) = r_{j}$.}\label{fig3}
\end{center}
\end{figure}

Suppose first that $r_j < \ell(Q) \leq c_{0}r_{0}$ for some absolute constant $c_{0} > 0$ ($c_{0} = 1/10$ is more than good enough). It follows that the lower edge $J_Q$ of $Q$
has to be contained in $H = [0,r_{0}] \times \{0\}$, since the second coordinate of $x$ is $r_j < \ell(Q)$, and so $J_Q \subset \R \times \{0\}$. Since moreover $\ell(Q) \le r_{0}$ and
$r_{0}$ is a dyadic number, we have that $Q \subset [0, r_{0})^2 $, and so $J_Q \subset H$.
Using this,
\begin{equation}\label{form28} \mu(3Q) \geq \mu(Q) \geq \mu(Q \cap H) = \Theta \cdot \calH^{1}(J_Q) = \Theta \cdot \ell(Q) = \frac{\epsilon \ell(Q)}{r_{0}}. \end{equation}
We, temporarily, further assume that $r_{j} < \ell(Q) \leq c_0r_{j - 1}$; this implies that $3Q$ does not meet the segment $E_{j - 1}$ (for $c_{0} > 0$ small enough), and hence by testing the $\beta$-numbers
with the line $L = \R \times \{0\}$ we get the following estimate:
\begin{align*}
\sum_{r_{j} < \ell(Q) \leq c_0r_{j - 1}} \beta^{2}_{2,\mu}(3Q) \frac{\ell(Q)}{\mu(Q)} 1_{Q}(x) & \lesssim \sum_{r_{j} < \ell(Q) \leq c_0r_{j - 1}}  \frac{1_{Q}(x)r_{0}^{2}}{\epsilon \ell(Q)} \sum_{i = 1}^{N} \int_{3Q \cap E_{i}} \left(\frac{r_{i}}{\ell(Q)}\right)^{2} \, d\calH^{1}\\
&\lesssim \sum_{r_{j} < \ell(Q) \leq c_0r_{j - 1}} 1_{Q}(x) \frac{r_{0}^{2}}{\epsilon \ell(Q)^{2}} \sum_{i = j}^{N} r_{i}^{2} \sim \frac{r_{0}^{2}}{\epsilon}.
\end{align*}
The same estimate also holds, if one sums over cubes $Q$ such that $x \in Q$ and $c_0r_{i} \leq \ell(Q) \leq c_0 r_{i - 1}$ for any $1 \leq i \leq j - 1$ (note also that $\ell(Q) > r_j$ in this case
if the sequence $(r_j)$ decays rapidly enough, which guarantees that we have the lower bound for $\mu(Q)$ exactly as above).
 There are at most $N \le 1/(\epsilon r_{0})$ such indices $i$, so it follows that
\begin{displaymath} \sum_{r_{j} < \ell(Q) \leq c_0r_{0}} \beta^{2}_{2,\mu}(3Q) \frac{\ell(Q)}{\mu(Q)} 1_{Q}(x) \lesssim \frac{r_{0}}{\epsilon^{2}} = c\epsilon. \end{displaymath}
This completes the case $r_{j} < \ell(Q) \leq c_{0}r_{0}$.

If $\ell(Q) \le r_{j}$, then the lower edge of $Q$ is contained in $E_j$. In the case of strict inequality $\ell(Q) < r_j$, we have $\ell(Q) \le r_j/2$, and so
$3Q$ intersects \textbf{no} line segments besides $E_{j}$ (if the sequence $(r_{j})$ decays rapidly enough). It follows that $\beta^{2}_{2,\mu}(3Q) = 0$ in case $\ell(Q) < r_{j}$, and there is nothing to prove.

Finally, if $\ell(Q) = r_{j}$, then $3Q$ just intersects $H$, and in fact $\calH^{1}(3Q \cap H) \sim \ell(Q)$.
It follows that
\begin{displaymath} \mu(3Q) \gtrsim \frac{\epsilon \ell(Q)}{r_{0}}. \end{displaymath}
For $\mu(Q)$ we get a worse estimate, because $Q$ does not meet $H$:
\begin{displaymath} \mu(Q) \geq \epsilon \calH^{1}(E_{j} \cap Q) = \epsilon\ell(Q). \end{displaymath}
Assuming that the decay of the numbers $(r_{i})$ is rapid enough, the square $3Q$ does not meet the line segments $E_{i}$ for $i \leq j - 1$, so we get the following estimate for the number $\beta^{2}_{2,\mu}(3Q)$ (using $L = \R \times \{0\}$ as the testing line):
\begin{displaymath} \beta^{2}_{2,\mu}(3Q) \lesssim \frac{r_{0}}{\ell(Q)} \sum_{i = 1}^{N} \int_{3Q \cap E_{i}} \left(\frac{r_{i}}{\ell(Q)}\right)^{2} \, d\calH^{1} \lesssim \frac{r_{0}}{\ell(Q)^{2}} \sum_{i = j}^{N} r_{i}^{2} \lesssim r_{0}. \end{displaymath} 
Consequently,
\begin{displaymath} \sum_{\ell(Q) = r_{j}} \beta^{2}_{2,\mu}(3Q) \frac{\ell(Q)}{\mu(Q)} 1_{Q}(x) \lesssim \frac{r_{0}}{\epsilon} = c\epsilon^2. \end{displaymath}
This completes the proof in the case $x \in E_{j}$, except for the squares $Q$ with $\ell(Q) \geq c_{0}r_{0}$. In that case, the lower edge of $Q$ is again contained on $\R \times \{0\}$, and $\mu(Q) \sim 1 \sim \mu(3Q)$. Thus,
\begin{displaymath} \beta^{2}_{2,\mu}(3Q) \lesssim \epsilon \sum_{i = 1}^{N} \int_{E_{i}} \left(\frac{r_{i}}{\ell(Q)} \right)^{2}\,d\calH^{1} \lesssim \frac{\epsilon r_{0}^{3}}{\ell(Q)^{2}}, \end{displaymath}
and so
\begin{displaymath} \sum_{\ell(Q) \geq c_{0}r_{0}} \beta_{2,\mu}^{2}(3Q) \frac{\ell(Q)}{\mu(Q)} 1_{Q}(x) \lesssim \epsilon r_{0}^{3} \sum_{\ell(Q) \geq c_{0}r_{0}} \frac{1_Q(x)}{\ell(Q)} \sim \epsilon r_{0}^{2} = c^{2}\epsilon^{7}. \end{displaymath}

It remains to consider the case $x \in H$. If $Q \ni x$ is a dyadic square of side-length $\ell(Q) \leq c_{0}r_{0}$, then the lower edge of $Q$ is obviously contained in $H$, and we have the estimates \eqref{form28} for $\mu(Q)$ and $\mu(3Q)$. This allows us to make the same computations as in the case $\ell(Q) > r_{j}$ above, and the $\beta$-sum is again bounded by $\lesssim r_{0}/\epsilon^{2} = c\epsilon$. The case $\ell(Q) \geq c_{0}r_{0}$ is exactly the same as above. The proof of Theorem \ref{main} is complete.

\begin{remark}
For our examples, and for the theory in whole, it is irrelevant whether the dyadic squares in $\calD$ are taken to be half-open or closed.
Only minor modifications are required in the arguments.
\end{remark}

\section{Proof of Theorem \ref{main2}}

We define the measure $\mu$ as a weak limit of certain measures $\mu_{n}$. Assume that $\epsilon \in (0,2^{-10})$ is a dyadic number, and set
\begin{displaymath} \mu_{1} := \nu((r_{1,j}),1,\epsilon), \end{displaymath} 
where $(r_{1,j})_{j = 0}^{N_{1}}$ is a suitable sequence of dyadic radii to be specified later, with
\begin{displaymath} r_{1,0} = c\epsilon^{10} \quad \text{and} \quad N_{1} = \frac{1}{2\epsilon r_{1,0}} \end{displaymath}
for some small constant $c > 0$. Then $N_{1}\epsilon r_{1,0} = 1/2 \leq 1$, so the upper bound condition for $N$ from Definition \ref{auxMeasure} is satisfied. Next, assume that the probability measure $\mu_{n}$ has already been defined for some $n \geq 1$, and $\mu_{n}$ has the form
\begin{displaymath} \mu_{n} = \sum_{i \in I_{n}} \omega_{i}^{n} \cdot \calH^{1}|_{J_{i}^{n}}, \end{displaymath}
where the numbers $\omega_{i}^{n}$ are positive dyadic numbers, and the sets $J_{i}^{n}$ are line segments of the form $J_{i}^{n} = [a^{n}_{i},b^{n}_{i}] \times \{c^{n}_{i}\}$, where $a^{n}_{i},b^{n}_{i},c^{n}_{i}$ are dyadic rationals. Further, assume that $b^{n}_{i} - a^{n}_{i} = d_{n}$ is a positive dyadic rational depending only on $n$. 

To define $\mu_{n + 1}$, choose an equally spaced set of dyadic rationals from the interval $[0,d_{n}]$, with "spacing" $\sigma_{n} > 0$, and call this set $D_{n}$. More precisely,
\begin{displaymath} D_{n} = \{0,\sigma_{n},\ldots,d_{n} - \sigma_{n}\}, \end{displaymath}
so that $|D_{n}| = d_{n}/\sigma_{n}$. 

The size of the number $\sigma_{n}$ is determined by the mutual distances between distinct line segments $J_{i}^{n}$. In fact, if the minimal distance between any distinct pair of line segments $J_{i_{1}}^{n},J_{i_{2}}^{n}$ is $\Delta_{n}$, then we set
\begin{equation}\label{form19} \sigma_{n} := \frac{\Delta_{n}}{1000}. \end{equation}
Next, for a fixed segment $J_{i}^{n}$ with $i \in I_{n}$, write 
\begin{equation}\label{form18} m_{n}^{i} := \frac{\omega_{i}^{n}d_{n}}{|D_{n}|} = \sigma_{n}\omega_{i}^{n}. \end{equation} 
Then, pick a decreasing sequence of dyadic radii $(r_{n + 1,j})_{j = 0}^{N^{i}_{n + 1}}$, where
\begin{equation}\label{form20} N^{i}_{n + 1} = \frac{2^{n - 1}m^{n}_{i}}{\epsilon r_{n + 1,0}}, \end{equation}
and set
\begin{displaymath} \nu^{i}_{n + 1} := \nu((r_{n + 1,j})_{j = 0}^{N^{i}_{n + 1}},m_{n}^{i},\epsilon/2^{n}). \end{displaymath}
Note that the choice of $N_{n + 1}^{i}$ is a dyadic positive integer, if $r_{n + 1,0}$ is a small enough dyadic number, where "small enough" only depends on the generation $n$ quantities $m_{n}^{i}$. Note that only the length of the sequence $(r_{n + 1,j})^{N^{i}_{n + 1}}_{j = 0}$ depends on the index $i \in I_{n}$, but the numbers themselves are all part of some single rapidly decreasing infinite sequence, which only depends on $n + 1$. We assume that
\begin{equation}\label{form9} r_{n + 1,0} \leq \min \left\{ \frac{(\epsilon \Delta_{n})^{10}}{100}, \frac{(\epsilon m(n))^{10}}{2^{2n}} \right\}, \end{equation}
where $m(n) = \min\{m^{i}_{n} : i \in I_{n}\}$. 

After the definition of $\nu_{n + 1}^{i}$ has been made for every segment $J^{n}_{i}$, we define the measure $\mu_{n + 1}$ as 
\begin{displaymath} \mu_{n + 1} := \sum_{i \in I_{n}} \sum_{t \in D_{n}} [(a_{i}^{n} + t,c_{i}^{n}) + \nu^{i}_{n + 1}]. \end{displaymath}
The notation $(a_{i}^{n} + t,c_{i}^{n}) + v^{i}_{n + 1}$ simply means the measure $\nu^{i}_{n + 1}$ translated by the vector $(a_{i}^{n} + t,c_{i}^{n})$. With this definition, $\mu_{n + 1}$ is a probability measure:
\begin{displaymath} \mu_{n + 1}(\R^{2}) = \sum_{i \in I_{n}} \sum_{t \in D_{n}} \frac{\omega_{i}^{n}d_{n}}{|D_{n}|} = \sum_{i \in I_{n}} \omega_{i}^{n}d_{n} = \mu_{n}(\R^{2}) = 1. \end{displaymath}
For a given interval $J^n_i$, $i \in I_n$, the choice of $N_{n+1}^i$ was made so that for every $t \in D_n$ we have
\begin{align*}
[(a_{i}^{n} + t,c_{i}^{n}) + \nu^{i}_{n + 1}](J^n_i) = m^n_i - N^{i}_{n+1} \cdot \frac{\epsilon}{2^n} \cdot r_{n+1,0} = \frac{m^n_i}{2}.
\end{align*}
In particular, we have
\begin{equation}\label{mestran}
\mu_{n+1}(J^n_i) = \sum_{t \in D_{n}} [(a_{i}^{n} + t,c_{i}^{n}) + \nu^{i}_{n + 1}](J^n_i) = |D_n| \frac{m^n_i}{2} = \frac{\mu_n(J^n_i)}{2}.
\end{equation}

If the numbers $r_{n,0}$ converge to zero quickly enough as $n \to \infty$, one can check that the measures $\mu_{n}$ converge weakly to a limit probability measure on $[0,1]^{2}$, which is our final measure $\mu$. 

Before proving (i) and (ii), we make a few useful remarks about the structure of $\mu$ and its support. First, the support of each measure $\mu_{n + 1}$ is contained in a finite union of closures of dyadic squares of side-length $r_{n + 1,0}$, namely 
\begin{displaymath} \tilde{\calQ}_{n + 1} = \{(a_{i}^{n} + t,c_{i}^{n}) + [0,r_{n + 1,0}]^{2} : i \in I_{n}, t \in D_{n}\}, \end{displaymath}
see Figure \ref{fig2}. The same is true for the measure $\mu$, and in fact we even have the improved conclusion that the support of $\mu$ is contained in the union of the proper dyadic squares
\begin{displaymath} \calQ_{n + 1} = \{(a_{i}^{n} + t,c_{i}^{n}) + [0,r_{n + 1,0})^{2} : i \in I_{n}, t \in D_{n}\}. \end{displaymath}
This improvement is due to the fact that the squares in $\calQ_{n + 2}$ contained in a square $Q \in \calQ_{n + 1}$ always stay at positive distance from the two "open sides" of the dyadic square $Q$. 

The following observation is an immediate corollary of the previous discussion, and the fact that for a fixed segment $J_{i}^{n}$, the squares $(a_{i}^{n} + t,c_{i}^{n}) + [0,r_{n + 1,0}]^{2}$ lie in a single row "on top" of $J_{i}^{n}$:
\begin{lemma}\label{auxLemma1} If $n \geq 1$ and $J_{i}^{n}$, $i \in I_{n}$, are the segments appearing in the definition of $\mu_{n}$, then $\spt \mu$ is contained in the union of rectangles of the form
\begin{displaymath} R^{n}_{i} = \bigcup_{h \in [0,r_{n + 1,0}]} [J^{n}_{i} + (0,h)], \end{displaymath}
and $\mu(R^{n}_{i}) = \mu_{n}(J^{n}_{i})$. 
\end{lemma}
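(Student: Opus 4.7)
The plan is to prove both assertions by working out, level by level, where the mass that descends from the particular segment $J^n_i$ ends up.

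First, at the immediate successor level, the definition of $\mu_{n+1}$ records that the portion associated with $J^n_i$ is
\[ \sum_{t \in D_n}[(a^n_i + t, c^n_i) + \nu^i_{n + 1}], \]
and since $\spt \nu^i_{n+1} \subset [0, r_{n+1,0}]^2$, each translate sits in the closed square $[a^n_i + t, a^n_i + t + r_{n+1,0}] \times [c^n_i, c^n_i + r_{n+1,0}]$. The horizontal fit $a^n_i + t + r_{n+1,0} \leq b^n_i$ is immediate from $t \leq d_n - \sigma_n$ together with $r_{n+1,0} \leq \sigma_n$, a comfortable consequence of \eqref{form9} and \eqref{form19}. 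Summing over $t \in D_n$ shows that all new mass inserted above $J^n_i$ is supported in $R^n_i$, and its total equals
\[ \sum_{t \in D_n} \nu^i_{n+1}(\R^2) \;=\; |D_n|\, m^n_i \;=\; \frac{d_n}{\sigma_n}\cdot \sigma_n \omega^n_i \;=\; \omega^n_i\, \calH^1(J^n_i) \;=\; \mu_n(J^n_i), \]
by \eqref{form18}.

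Next, I would iterate. Any new segment $J^{n+1}_{i'}$ created above $J^n_i$ is of the form $(a^n_i + t, c^n_i) + E_j$ or $(a^n_i + t, c^n_i) + H$ from Definition \ref{auxMeasure}, and so it lies at height $c^n_i + r_{n+1,j}$ with $r_{n+1,j} \leq r_{n+1,1}$. The rectangle $R^{n+1}_{i'}$ has vertical extent $r_{n+2,0}$, so $R^{n+1}_{i'} \subset R^n_i$ as soon as $r_{n+1,1} + r_{n+2,0} \leq r_{n+1,0}$. This is arranged by choosing the sequence $(r_{n+1,j})$ with enough initial drop (say $r_{n+1,1} \leq r_{n+1,0}/2$) and then invoking \eqref{form9} on $r_{n+2,0}$. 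An induction on $k$ then gives $R^k_{i''} \subset R^n_i$ for every descendant, hence $\spt \mu_k$ restricted to the offspring of $J^n_i$ is contained in $R^n_i$ for all $k \geq n+1$. Passing to the weak limit produces the inclusion $\spt \mu \subset \bigcup_{i \in I_n} R^n_i$.

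For the mass identity, observe that the descendants of $J^n_i$ never exchange mass with the descendants of any other segment $J^n_{i'}$: by \eqref{form9} and \eqref{form19} the rectangles $R^n_i$ are pairwise separated (they lie at mutual distance at least $\Delta_n - 2r_{n+1,0} > 0$). Locality of the construction therefore yields $\mu_k(R^n_i) = \mu_n(J^n_i)$ for every $k \geq n$. Since $\spt \mu$ is contained in the proper half-open squares of $\calQ_{k}$ for every $k$, it avoids the boundary $\partial R^n_i$ entirely, so weak convergence $\mu_k \to \mu$ transfers the identity to $\mu(R^n_i) = \mu_n(J^n_i)$. The main obstacle, should one want a fully honest proof, is the vertical bookkeeping in the induction: one has to guarantee that, generation after generation, the unused room $r_{k+1,0} - r_{k+1,1}$ above the topmost inner segment can swallow the next-generation height $r_{k+2,0}$. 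This is a mild strengthening of \eqref{form9} to be absorbed into the standing assumptions on the sequences $(r_{k+1,j})_j$; once built in, the induction runs mechanically.
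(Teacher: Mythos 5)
Your argument follows the same route as the paper's one-sentence justification: the new mass inserted above $J^n_i$ sits inside the row of squares $\{(a^n_i+t,c^n_i)+[0,r_{n+1,0}]^2 : t \in D_n\} \subset R^n_i$, this nesting persists by induction through all later generations, and passing to the weak limit keeps $\spt\mu$ in $\bigcup_i R^n_i$. You correctly isolate the vertical bookkeeping the paper glosses over: to get $R^{k+1}_{i'} \subset R^k_i$ one needs $r_{k+1,1} + r_{k+2,0} \leq r_{k+1,0}$, which is not literally a consequence of \eqref{form9} alone, since $\Delta_{k+1}$ records gaps between segments but not the gap $r_{k+1,0}-r_{k+1,1}$ between the top segment $E_{k+1,1}$ and the top of the square. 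Your fix --- building in, say, $r_{k+1,1} \leq r_{k+1,0}/2$ --- is a mild extra hypothesis, fully consistent with the paper's recurring ``rapid decay'' assumptions on each sequence $(r_{k,j})_j$, and you are right to flag it as something to fold into the standing assumptions.

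One inaccuracy in the mass identity step: $\spt\mu$ does \emph{not} ``avoid $\partial R^n_i$ entirely.'' The half-open squares of $\calQ_{n+1}$ include their bottom and left edges, so $\spt\mu$ can meet $J^n_i$ (the bottom edge of $R^n_i$) and the left edge $\{a^n_i\}\times[c^n_i,c^n_i+r_{n+1,0}]$; indeed the paper later proves $\mu(J^n_i)=0$, which would be vacuous if the support missed that edge. Fortunately the conclusion survives without the boundary claim. Since $R^n_i$ is closed and $\mu_k(R^n_i)=\mu_n(J^n_i)$ for all $k \geq n$ by locality, the portmanteau theorem gives $\mu(R^n_i) \geq \limsup_k\mu_k(R^n_i) = \mu_n(J^n_i)$. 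The rectangles are pairwise disjoint and cover $\spt\mu$, so $\sum_i \mu(R^n_i) = 1 = \sum_i \mu_n(J^n_i)$, which forces equality in each term.
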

\begin{figure}[h!]
\begin{center}
\includegraphics[scale = 0.4]{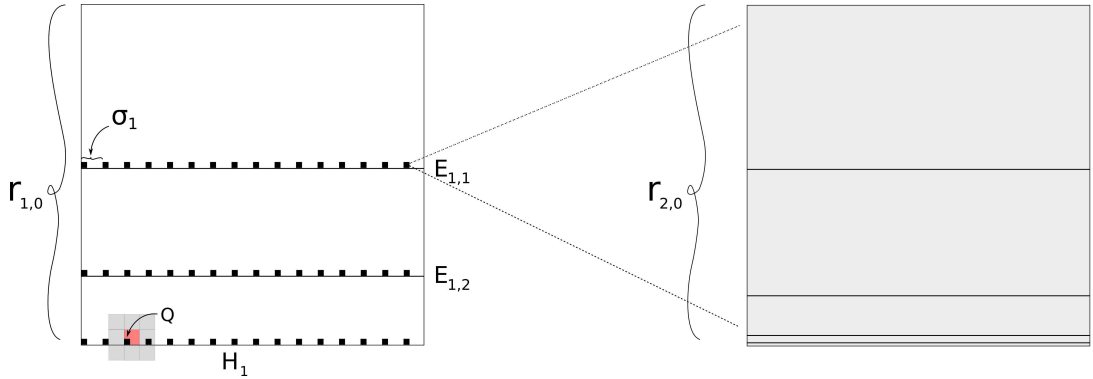}
\caption{The squares in $\calQ_{2}$, and the small square $Q$ appearing in the proof of Lemma \ref{auxLemma2}. The grey area around $Q$ is $3Q$.}\label{fig2}
\end{center}
\end{figure}
Since the side-lengths $r_{n + 1,0}$ are, by \eqref{form9}, far smaller than both the "spacing" $\sigma_{n}$ and the distance $\Delta_{n}$, the following holds for all squares $Q_{1},Q_{2} \in \calQ_{n + 1}$:
\begin{equation}\label{separation} d(Q_{1},Q_{2}) \geq 5r_{n + 1,0}. \end{equation}
This gives the following observation:
\begin{lemma}\label{squareSeq} For every point $x \in \spt \mu$ there exists a unique sequence of dyadic squares $Q_{1},Q_{2},\ldots$ with 
\begin{displaymath} Q_{n} \in \calQ_{n} \quad \text{and} \quad x \in Q_{n}. \end{displaymath}
The square $Q_{n}$ in this sequence will be denoted by $Q_{n}(x)$.
\end{lemma}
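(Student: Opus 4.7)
The statement is short and I expect the proof to be short as well, essentially assembling two facts that have already been set up in the excerpt. My plan is as follows.

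For existence, I would invoke directly the discussion immediately preceding the lemma, which established that $\spt \mu \subset \bigcup_{Q \in \calQ_n} Q$ for every $n \geq 1$ (the "improvement" from the closures in $\tilde{\calQ}_n$ to the half-open dyadic squares in $\calQ_n$ was justified there by the fact that the squares of $\calQ_{n+2}$ contained in a $Q \in \calQ_{n+1}$ stay at positive distance from the two open sides of $Q$, so no support can accumulate on those sides). Given this, for any $x \in \spt \mu$ and any $n \geq 1$, there exists at least one $Q_n \in \calQ_n$ with $x \in Q_n$.

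For uniqueness, I would appeal to the separation estimate \eqref{separation}: any two distinct squares $Q, Q' \in \calQ_n$ satisfy $d(Q,Q') \geq 5 r_{n,0} > 0$, hence in particular $Q \cap Q' = \emptyset$. Therefore, for each $n$ there is \emph{at most} one square in $\calQ_n$ containing $x$, so the sequence $Q_1, Q_2, \ldots$ with $x \in Q_n \in \calQ_n$ is uniquely determined, and the definition $Q_n(x) := Q_n$ is unambiguous.

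The main — in fact only — potential obstacle here is convincing oneself that the inclusion $\spt \mu \subset \bigcup \calQ_n$ genuinely uses the half-open (as opposed to merely closed) squares, but this was already accounted for by the authors in the paragraph preceding the lemma; once this is taken for granted, the argument is a one-liner combining that inclusion with \eqref{separation}. No nesting property of the sequence $(Q_n(x))_n$ is claimed in the statement, so I would not attempt to prove one here, although the construction makes it clear that $Q_{n+1}(x) \subset Q_n(x)$ would follow from the fact that each segment $J_i^n$ lies inside a unique square of $\calQ_n$.
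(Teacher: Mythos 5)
Your proof is correct and matches the paper's (implicit) argument: the paper states the lemma as an immediate observation following \eqref{separation}, relying precisely on the two facts you cite, namely the inclusion $\spt\mu \subset \bigcup_{Q \in \calQ_n} Q$ established in the preceding discussion (for existence) and the separation estimate \eqref{separation} giving pairwise disjointness of the squares in $\calQ_n$ (for uniqueness).
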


Observe that to every square $Q \in \calQ_{n + 1}$, $n \geq 0$, one may canonically associate a sequence 
\begin{displaymath} (r_{n + 1,j})_{j = 0}^{N^{Q}_{n + 1}} \end{displaymath}
and a measure $\nu_{Q}$. For example, for $n = 0$, the collection $\calQ_{n + 1} = \calQ_{1}$ consists of the single square $[0,r_{1,0})^{2}$, so the sequence is $(r_{1,j})_{j = 0}^{N_{1}}$ and the measure is $\nu_{Q} = \mu_{1}$. In general, the measure $\nu_{Q}$ is simply $\nu_{Q} = \mu_{n + 1}|_{Q}$. Then
\begin{displaymath} \nu_{Q} = (a_{i}^{n} + t,c_{i}^{n}) + \nu^{i}_{n + 1} \end{displaymath}
for some $i \in I_{n}$ and $t \in D_{n}$, and the "canonical sequence" $(r_{n + 1,j})$ associated to $Q$ is
\begin{displaymath} (r_{n + 1,j})_{n = 0}^{N^{Q}_{n + 1}} := (r_{n + 1,j})_{j = 0}^{N^{i}_{n + 1}} \end{displaymath}
for this particular $i$. With this notation in mind, we prove another lemma:
\begin{lemma}\label{auxLemma2} Suppose $n \geq 1$, and $Q$ is a dyadic square containing a point $x \in \spt \mu$. Let $Q_{n} = Q_{n}(x) \in \calQ_{n}$ be the square defined in Lemma \ref{squareSeq}. Let $(r_{n,j})$, $0 \leq j \leq N^{Q_{n}}_{n} =: N$ be the sequence define above, and assume that 
\begin{displaymath} \ell(Q) < r_{n,N}/2. \end{displaymath}
Then, all the squares $Q' \in \calQ_{n + 1}$ with $3Q \cap Q' \neq \emptyset$ are horizontal translates of each other, and in fact the measures $\mu_{n + 1}|_{Q'}$ for such $Q'$ are horizontal translates of each other. 
\end{lemma}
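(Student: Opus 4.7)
The plan is to exploit $\ell(Q) < r_{n,N}/2$ to confine $3Q$ to a single horizontal row of $\calQ_{n+1}$-squares sitting above one common segment $J_i^n$; both conclusions of the lemma will then be immediate from the construction, since the squares in such a row are by definition the horizontal translates $(a_i^n + t, c_i^n) + [0,r_{n+1,0})^2$, $t \in D_n$, and the restrictions $\mu_{n+1}|_{Q'}$ are the corresponding horizontal translates of the single measure $\nu_{n+1}^i$.

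First I would show that every $Q' \in \calQ_{n+1}$ meeting $3Q$ is contained in $Q_n$, and then analyse the rows inside $Q_n$. Dyadic nesting (together with $x \in Q \cap Q_n$ and $\ell(Q) < r_{n,N}/2 \leq r_{n,0}/2 = \ell(Q_n)/2$) forces $Q \subset Q_n$, while by \eqref{separation} at scale $n$ any other $\calQ_n$-square lies at distance at least $5r_{n,0} \gg \ell(Q)$ from $Q_n$; since every point of $3Q$ is within $\ell(Q)$ of $Q \subset Q_n$, this rules out intersections of $3Q$ with $\calQ_{n+1}$-squares outside $Q_n$. Inside $Q_n$ the squares of $\calQ_{n+1}$ sit, by construction, in horizontal rows of vertical thickness $r_{n+1,0}$ placed immediately above the horizontal segments supporting $\mu_n|_{Q_n}$, namely $H$ at height $0$ and $E_j$ at heights $r_{n,j}$, $1 \leq j \leq N$, relative to the bottom of $Q_n$. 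The smallest vertical gap between two adjacent such rows is $r_{n,N} - r_{n+1,0}$, realised between the rows above $H$ and $E_N$; the remaining gaps $r_{n,j-1} - r_{n,j} - r_{n+1,0}$ are strictly larger given the rapid decay of $(r_{n,j})$.

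Finally, since $H$ and $E_N$ are two distinct $J_i^n$-segments at vertical distance $r_{n,N}$, one has $\Delta_n \leq r_{n,N}$, and \eqref{form9} then yields $r_{n+1,0} \leq (\epsilon \Delta_n)^{10}/100 \leq r_{n,N}/4$. Combined with the dyadicity of $\ell(Q)$ and $r_{n,N}$ and the strict inequality $\ell(Q) < r_{n,N}/2$, this forces $\ell(Q) \leq r_{n,N}/4$, hence $\ell(3Q) \leq 3r_{n,N}/4 < r_{n,N} - r_{n+1,0}$. The vertical extent of $3Q$ is therefore strictly less than the minimum inter-row gap, so $3Q$ meets at most one row, and the lemma follows from the first paragraph. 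The only delicate point is the inequality $\Delta_n \leq r_{n,N}$, which is what converts the uniform smallness \eqref{form9} into the quantitative $r_{n+1,0} \ll r_{n,N}$; everything else is a straightforward comparison of scales.
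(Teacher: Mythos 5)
Your proof is correct, and it reaches the conclusion by a route that is noticeably different from the paper's. The paper exploits dyadicity of $\ell(Q)$ and of the numbers $r_{n,j}$ to localise $Q$ to a single slab $\R\times[a,b)$ between consecutive levels in $\{0, r_{n,N},\ldots, r_{n,1},\infty\}$, then observes that $Q$ must have an edge on the lower slab boundary and that $\ell(Q)<r_{n,N}/2$ keeps $3Q$ from reaching the next row up; this only needs the hypothesis and dyadic alignment, not the quantitative decay \eqref{form9}. You instead first confine all relevant $\calQ_{n+1}$-squares to $Q_n$ via dyadic nesting and \eqref{separation}, then make the key observation $\Delta_n \leq r_{n,N}$ (since $H_n$ and $E_{n,N}$ inside $Q_n$ are two $J^n_i$-segments at vertical distance $r_{n,N}$), so that \eqref{form9} forces $r_{n+1,0}\ll r_{n,N}$, and finally compare the vertical extent $3\ell(Q)\leq 3r_{n,N}/4$ (where dyadicity is used to upgrade $<r_{n,N}/2$ to $\leq r_{n,N}/4$) to the minimum inter-row gap $r_{n,N}-r_{n+1,0}>3r_{n,N}/4$. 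Your route avoids the slab decomposition and the case analysis over the possible slabs, at the cost of invoking \eqref{form9} which the paper's proof of this lemma does not actually need; both are valid. Two small inaccuracies that don't affect the outcome: points of $3Q$ are within $\sqrt{2}\,\ell(Q)$ of $Q$ rather than $\ell(Q)$, and the claim that the gaps $r_{n,j-1}-r_{n,j}-r_{n+1,0}$ exceed $r_{n,N}-r_{n+1,0}$ does require the rapid decay of $(r_{n,j})$ (e.g.\ $r_{n,j}\leq r_{n,j-1}/2$), as you note, but this should be stated as an assumption on the construction rather than taken for granted.
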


\begin{proof} A possible position of the square $Q$ is depicted in the left half of Figure \ref{fig2}. We only prove the lemma in the case $n = 1$; the general case is no different but would require introducing even more notation. In the case $n = 1$, the index set $I_{n} = I_{1}$ is $\{1,\ldots,N_{1}, N_{1}+1\}$, and the segments $J^{1}_{i}$ are the segments $E_{1,j}$ (for $1 \leq j \leq N_{1}$) and $J^{1}_{N_{1} + 1} = H_{1}$. Further, $N = N_{1}$. 

The main assumption $\ell(Q) < r_{1,N_{1}}/2$ means that the side-length of $Q$ is less than half of the minimal (vertical) gap between the segments $E_{1,j}$ and $H_{1}$ -- indeed this minimal gap is the one between $E_{1,N_{1}} = [0,r_{1,0}] \times \{r_{1,N_{1}}\}$ and $H_{1} = [0,r_{1,0}] \times \{0\}$. Since $Q$ is a dyadic square, and the numbers $r_{1,j}$ are all dyadic, this implies that $Q$ is entirely contained in some slab of the form $\R \times [a,b)$, where $a < b$ and $a, b$ are consecutive elements from $\{0,r_{1,N_1},\ldots,r_{1,1},\infty\}$. Assume, for example, that
\begin{displaymath} Q \subset \R \times [0,r_{1,N_{1}}), \end{displaymath}
as in Figure \ref{fig2}. The other cases are handled similarly. Since $Q \cap \spt \mu \neq \emptyset$, and $\spt \mu$ is contained in the union of the squares $\calQ_{2}$, we conclude that either $Q$ is contained in one of the squares in $\calQ_{2}$ (if $\ell(Q) \leq r_{2,0}$) and the lemma is trivial, or else (if $\ell(Q) > r_{2,0}$), $Q$ contain a square $Q' \in \calQ_{2}$ with $Q' \subset \R \times [0,r_{1,N_{1}})$. Such a square $Q'$ must have an edge contained in $\R \times \{0\}$, so also $Q$ has an edge contained in $\R \times \{0\}$. Since $\ell(Q) < r_{1,N_{1}}/2$, it follows that the $y$-coordinate of every point in $3Q$ is strictly smaller than $r_{1,N_{1}}$. Thus, $3Q$ does not meet any of the rectangles $R_{i}^{1}$ for $1 \leq i \leq N_{1}$. On the other hand, the squares $Q' \in \calQ_{2}$ contained in $R^{1}_{N_{1} + 1}$ are, indeed, horizontal translates of each other (inspecting the definition of $\calQ_{2}$, they have the form $Q' = (t,0) + [0,r_{2,1})^{2}$, $t \in D_{1}$). The same is true about the measures: in this particular case they all have the form
\begin{displaymath} (t,0) + \nu((r_{2,j})_{j = 0}^{N^{N_{1} + 1}_{2}},m_{1}^{N_{1} + 1},\epsilon/2), \quad t \in D_{1}. \end{displaymath}
This completes the proof. \end{proof}

\subsection{Lower density vanishes}
We say that a generation $n$, $n \ge 1$, segment $J^n_i$, $i \in I_n$, is of type $E$, if it is not the lower edge of a square in $\calQ_{n}$. The lower edges of squares in $\calQ_{n}$ are called type $H$.
Let $n \ge 1$ and $k \ge 0$. Suppose that $J^n_i$ is of type $E$ and that $x \in R^n_i$ (recall the notation from Lemma \ref{auxLemma1}).

We define $N_n = \max N_n^i$.
If $k = 0$ we clearly have by construction that
$$
\mu_n(B(x, r_{n, N_n}/2)) \le \frac{\epsilon}{2^{n-1}} r_{n, N_n}.
$$
Suppose then that $k = 1$. The ball $B(x, r_{n, N_n}/2)$ can intersect at most $r_{n, N_n} / \sigma_n$ squares in $\calQ_{n + 1}$. Therefore, we have
$$
\mu_{n+1}(B(x, r_{n, N_n}/2)) \le \frac{r_{n, N_n}}{\sigma_n} \frac{\frac{\epsilon}{2^{n-1}} r_{n, 0}}{|D_n|} = \frac{\epsilon}{2^{n-1}} r_{n, N_n}.
$$
The same proof yields with all $k \ge 0$ that
$$
\mu_{n+k}(B(x, r_{n, N_n}/2)) \le  \frac{\epsilon}{2^{n-1}} r_{n, N_n}.
$$
Letting $k \to \infty$ we get (recall that $\mu(G) \le \liminf_{m \to \infty} \mu_m(G)$ for open sets $G$):
$$
\mu(B(x, r_{n, N_n}/2)) \le  \frac{\epsilon}{2^{n-1}} r_{n, N_n}.
$$
We can conclude that
$$
\Theta_*^1(\mu, x) = 0,
$$
if $x$ belongs to infinitely many $R^n_i$, where the corresponding intervals $J^n_i$ are of type $E$.

Next, let us prove that $\mu(J^n_i) = 0$ for all $n \ge 1$ and $i \in I_n$.
Indeed, notice that for every $k \ge 1$ we have
\begin{align*}
\mu(J^n_i) &\le \mu\Bigg( \bigcup_{h \in [0, r_{n+k, N_{n+k}})}  [J^n_i + (0,h) ] \Bigg) \\
& = \mu_{n+k}\Bigg( \bigcup_{h \in [0, r_{n+k, N_{n+k}}) }  [J^n_i + (0,h) ] \Bigg) = \mu_{n+k}(J^n_i) \le \frac{\mu_n(J^n_i)}{2^k} \le 2^{-k}.
\end{align*}
Here we used how the measures are constructed and \eqref{mestran}.

Define
$$
A = \bigcup_{n_0 \ge 1} \bigcap_{n \geq n_{0}} \{x \in \spt\mu\colon\, x \in R^n_i, \textup{ where } J^n_i \textup{ is of type } H\}.
$$
To conclude that $\Theta_*^1(\mu, x) = 0$ for $\mu$-a.e. $x$, it is enough to prove that $\mu(A) = 0$. This follows from the inclusion
\begin{displaymath} A \subset \bigcup_{n \geq 1} \bigcup_{i \in I_{n}} J_{i}^{n}, \end{displaymath}
and the fact that $\mu(J_{i}^{n}) = 0$ for all $n,i$. 

\subsection{The estimate for Jones' square function} In this subsection, we prove that
\begin{equation}\label{form27} \sum_{Q \in \calD} \beta^{2}_{2,\mu}(3Q) \frac{\ell(Q)}{\mu(Q)} \, 1_{Q}(x) \lesssim \epsilon, \quad x \in \spt \mu, \end{equation}
if the parameters in the construction of $\mu$ are chosen appropriately. Fix $x \in \spt \mu$, and let $Q_{n} := Q_{n}(x)$ be the squares defined in Lemma \ref{squareSeq}. As explained above Lemma \ref{auxLemma2}, the square sequence $(Q_{n})_{n = 1}^{\infty}$ gives rise to a sequence of numbers $N_{n} := N_{n}^{Q_{n}}$, $n \geq 1$, and associated sequence of sequences
\begin{displaymath} (r_{n,j})_{j = 0}^{N_{n}} = (r_{n,j})_{j = 0}^{N^{Q_{n}}_{n}}, \quad n \geq 1. \end{displaymath}
Heuristically, these sequences tell us, what kind of "pictures" we see as we zoom to $x$ along the rapidly shrinking dyadic squares $Q_{n}$. The challenge of the proof below will be to handle the $\beta$-numbers for dyadic squares $Q \ni x$, whose side-length lies (far) between the side-lengths of consecutive squares $Q_{n}$ and $Q_{n + 1}$. More precisely, we aim to prove that
\begin{equation}\label{form110} \sum_{r_{n,N_{n}}/2 \leq \ell(Q) < r_{n - 1,N_{n - 1}}/2} \beta^{2}_{2,\mu}(3Q) \frac{\ell(Q)}{\mu(Q)} \, 1_{Q}(x) \lesssim \epsilon/2^{n} \end{equation}
for $n \geq 1$, where we agree that $r_{0,N_{0}} = 1$. This still leaves the sum over the squares with $\ell(Q) \geq 1/2$, but this is easy and will be treated at the end of the proof. The upper bound $\ell(Q) < r_{n - 1,N_{n - 1}}/2 \leq r_{n - 1,0}$ guarantees (recalling the separation condition \eqref{separation}) that every square appearing in the summation only meets one of the squares in $\calQ_{n - 1}$, namely $Q_{n - 1}$. Moreover, since $\ell(Q) < r_{n - 1,N_{n - 1}}/2$, Lemma \ref{auxLemma2} guarantees that the squares in
\begin{displaymath} \calQ_{n}(3Q) := \{Q' \in \calQ_{n} : Q' \cap 3Q \neq \emptyset\}, \end{displaymath}
are horizontal translates of each other, and the same is true for the measures $\mu_{n}|_{Q'}$ with $Q' \in \calQ_{n}(3Q)$. In particular, these measures have a common mass $m$, and by construction, $\mu(Q') = \mu_{n}(Q') = m$ for all $Q' \in \calQ_{n}(3Q)$.

We first prove \eqref{form110} for the dyadic squares $Q$ with $\ell(Q)$ in the restricted range
\begin{equation}\label{form16} r_{n,0} \leq \ell(Q) < r_{n - 1,N_{n - 1}}/2. \end{equation}
\begin{figure}[h!]
\begin{center}
\includegraphics[scale = 0.8]{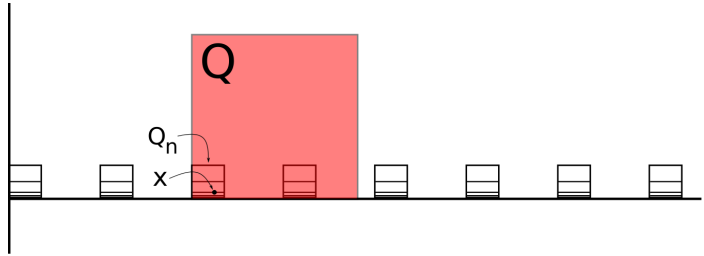}
\caption{The case $r_{n,0} \leq \ell(Q) < r_{n - 1,N_{n - 1}}/2$.}\label{fig4}
\end{center}
\end{figure}
This case is depicted in Figure \ref{fig4}.
Since $x \in Q$ and $\ell(Q) \geq r_{n,0}$, the square $Q$ contains the dyadic square of side-length $r_{n,0}$ containing $x$, namely $Q_{n}$. This gives $\mu(Q) \geq \mu(Q_{n}) = m$. If $q := \card \calQ_{n}(3Q)$, then in fact
\begin{equation}\label{form17} \mu(3Q) \geq \mu(Q) \gtrsim qm, \end{equation}
which follows easily from the inequality $\mu(Q) \geq m$, and the fact that the squares in $\calQ_{n}(3Q)$ are horizontal translates of each other. In order to make further progress in estimating the number $\beta_{2,\mu}^{2}(3Q)$, we need to understand -- and set up notation on -- how $\mu$ looks like inside the squares $Q' \in \calQ_{n}(3Q)$. By definition, $\mu_{n}$ restricted to any square $Q' \in \calQ_{n}(3Q)$ has the form
\begin{displaymath} \mu_{n}|_{Q'} = x_{Q'} + \nu((r_{n,j})^{N_{n}}_{j = 0},m,\epsilon/2^{n - 1}) = x_{Q'} + \sum_{j = 1}^{N_{n}} \frac{\epsilon}{2^{n - 1}} \cdot \calH^{1}|_{E_{n,j}} + \Theta_{n} \cdot \calH^{1}|_{H_{n}}, \end{displaymath} 
where $x_{Q'}$ is the lower left corner point of $Q'$, the $E_{n,j}$'s and $H_{n}$ are the line segments of length $r_{n,0}$ defined in Definition \ref{auxMeasure}, and $\Theta_{n} = \Theta_{n}(m)$ solves the equation 
\begin{displaymath} N_{n}(\epsilon/2^{n - 1})r_{n,0} + \Theta_{n}r_{n,0} = m. \end{displaymath} 
As pointed out in Lemma \ref{auxLemma1}, the support of $\mu|_{Q'}$ lies close to the support of $\mu_{n}$: if 
\begin{displaymath} \tilde{E}_{n,j} := \bigcup_{h \in [0,r_{n + 1,0}]} [E_{n,j} + (0,h)] \end{displaymath}
and similarly 
\begin{displaymath}  \tilde{H}_{n} := \bigcup_{h \in [0,r_{n + 1,0}]} [H_{n} + (0,h)], \end{displaymath}
then
\begin{displaymath} \spt (\mu|_{Q'}) \subset x_{Q'} + \bigcup_{j = 1}^{N_{n}} \tilde{E}_{n,j} \cup \tilde{H}_{n} \end{displaymath}
for $Q' \in \calQ_{n}(3Q)$. Recall that the measures $\mu_{Q'}$ with $Q' \in \calQ_{n}(3Q)$ are all horizontal translates of each other: this implies that, for each fixed $1 \leq j \leq N_{n}$, the segments $x_{Q'} + E_{n,j}$ are contained on a single line $L_{j}$. Similarly, the segments $x_{Q'} + H_{n}$ are contained on a single line $L = L_{N_{n} + 1}$, and this line $L$ will be used to estimate the number $\beta_{2,\mu}^{2}(3Q)$. We use the following bounds:
\begin{displaymath} d(y,L) \sim r_{n,j}, \qquad y \in x_{Q'} + \tilde{E}_{n,j}, \end{displaymath}
and 
\begin{displaymath} d(y,L) \leq r_{n + 1,0}, \qquad y \in x_{Q'} + \tilde{H}_{n}, \end{displaymath}
for any $Q' \in \calQ_{n}(3Q)$. Consequently, recalling \eqref{form17},
\begin{align*} 
&\beta_{2,\mu}^{2}(3Q) \\
 & \lesssim \frac{1}{qm} \sum_{Q' \in \calQ_{n}(3Q)} \int_{Q'} \left(\frac{d(y,L)}{\ell(Q)}\right) \, d\mu(y)\\
& \lesssim \frac{1}{qm} \sum_{Q' \in \calQ_{n}(3Q)} \left[ \int_{x_{Q'} + \tilde{H}_{n}} \left(\frac{r_{n + 1,0}}{\ell(Q)} \right)^{2} \, d\mu(y) + \sum_{j = 1}^{N_{n}} \int_{x_{Q'} + \tilde{E}_{n,j}} \left(\frac{r_{n,j}}{\ell(Q)}\right)^{2} \, d\mu(y) \right]. 
\end{align*}
We estimate the integrals separately. First, the trivial bound $\mu(x_{Q'} + \tilde{H}^{n}) \leq m$ gives
\begin{displaymath} \int_{x_{Q'} + \tilde{H}_{n}} \left(\frac{r_{n + 1,0}}{\ell(Q)}\right)^{2} \, d\mu(y) \leq m \left(\frac{r_{n + 1,0}}{\ell(Q)} \right)^{2}. \end{displaymath}
For $1 \leq j \leq N_{n}$ we have $\mu(x_{Q'} + \tilde{E}_{n,j}) = \mu_{n}(x_{Q'} + E_{n,j}) \leq \ell(Q') = r_{n,0}$, so that
\begin{displaymath} \sum_{j = 1}^{N_{n}} \int_{x_{Q'} + \tilde{E}_{j}^{n}} \left(\frac{r_{n,j}}{\ell(Q)}\right)^{2} \, d\mu(y) \leq \frac{r_{n,0}}{\ell(Q)^{2}} \sum_{j = 1}^{N_{n}} r_{n,j}^{2} \lesssim \frac{r_{n,0}^{3}}{\ell(Q)^{2}}. \end{displaymath} 
Altogether,
\begin{displaymath} \beta_{2,\mu}^{2}(3Q) \lesssim \left(\frac{r_{n + 1,0}}{\ell(Q)}\right)^{2} + \frac{1}{m}\frac{r_{n,0}^{3}}{\ell(Q)^{2}}, \end{displaymath}
so
\begin{align*} \sum_{r_{n,0} \leq \ell(Q) < r_{n - 1, N_{n - 1}}/2} & \beta^{2}_{2,\mu}(3Q)\frac{\ell(Q)}{\mu(Q)}1_{Q}(x)\\
& \lesssim \sum_{r_{n,0} \leq \ell(Q) < r_{n - 1,N_{n - 1}}/2} \left[ \frac{r_{n + 1,0}^{2}}{m \cdot \ell(Q)} + \frac{r_{n,0}^{3}}{m^{2} \cdot \ell(Q)} \right]1_Q(x)\\
& \lesssim \frac{r_{n + 1,0}^{2}}{m \cdot r_{n,0}} + \frac{r_{n,0}^{2}}{m^{2}} \leq \frac{r_{n + 1,0}^{2}}{m(n - 1) \cdot r_{n,0}} + \frac{r_{n,0}^{2}}{m(n - 1)^{2}}. \end{align*} 
Recalling the size condition \eqref{form9} for $r_{n,0}$ (the definition of $m(n - 1)$ can also be found under \eqref{form9}), the right hand side is certainly $\lesssim \epsilon/2^{n}$, as required by \eqref{form110}.

The next goal is to prove that
\begin{equation}\label{form11} \sum_{r_{n,N_{n}}/2 \leq \ell(Q) < r_{n,0}} \beta^{2}_{2,\mu}(3Q) \frac{\ell(Q)}{\mu(Q)} \, 1_{Q}(x) \lesssim \frac{\epsilon}{2^{n}}. \end{equation}
In this case, since $\ell(Q) < r_{n,0}$ and the relevant squares $Q$ contain the point $x \in Q_{n} \in \calQ_{n}$, we have in fact $Q \subset Q_{n}$, and also $3Q$ meets no other squares in $\calQ_{n}$ besides $Q_{n}$. To simplify notation, we assume that
\begin{displaymath} Q_{n} = [0,r_{n,0})^{2}. \end{displaymath}
This has the benefit that the generation $n$ segments $E_{j} = E_{n,j}$ and $H = H_{n}$ inside $Q_{n}$ have the following simple expressions:
\begin{displaymath} E_{j} = [0,r_{n,0}] \times \{r_{n,j}\} \quad \text{and} \quad H = [0,r_{n,0}] \times \{0\}. \end{displaymath}
With this notation, recall that
\begin{displaymath} \mu_{n}|_{Q_{n}} = \nu((r_{n,j})_{j = 0}^{N_{n}},m,\epsilon/2^{n - 1}) = \sum_{j = 1}^{N_{n}} \frac{\epsilon}{2^{n - 1}} \cdot \calH^{1}|_{E_{j}} + \Theta \cdot \calH^{1}|_{H}, \end{displaymath}
where $\Theta = \Theta_{n}(m)$ solves the equation
\begin{displaymath} N_{n}(\epsilon/2^{n - 1})r_{n,0} + \Theta r_{n,0} = m. \end{displaymath} 
Recall from \eqref{form20} and \eqref{form18} that the number $N_{n}$ satisfies
\begin{displaymath} N_{n} = \frac{2^{n - 2}m}{\epsilon r_{n,0}}, \end{displaymath}
so that 
\begin{displaymath} \Theta = \frac{m}{2r_{n,0}} \sim \frac{m}{r_{n,0}}. \end{displaymath} 
We also re-introduce the notation for the rectangles
\begin{displaymath} \tilde{E}_{j} := \bigcup_{h \in [0,r_{n + 1,0}]} [E_{j} + (0,h)] \quad \text{and} \quad \tilde{H} := \bigcup_{h \in [0,r_{n + 1,0}]} [H + (0,h)], \end{displaymath}
so that the support of $\mu|_{Q_{n}}$ is contained in the union of these rectangles, and $\mu(\tilde{E}_{j}) = \mu_{n}(E_{j}) = (\epsilon/2^{n - 1})r_{n,0}$ and $\mu(\tilde{H}) = \mu_{n}(H) = \Theta r_{n,0}$. In particular, the point $x \in \spt \mu \cap Q_{n}$ is contained in one of these rectangles, and the proof splits accordingly.
\subsubsection{The case where $x \in \tilde{H}$} In this case, the square $Q_{n + 1} \ni x$ has the form $[v,v + r_{n + 1,0}) \times [0,r_{n + 1,0})$ for some $0 \leq v < r_{n,0} - \sigma_{n}$, so $x = (t,h)$ with $t \in [0,r_{n,0})$ and 
\begin{displaymath} h < r_{n + 1,0} < r_{n,N_{n}}/2, \end{displaymath}
assuming \eqref{form9}. Now, if $Q$ is any dyadic square with $x \in Q$ and $r_{n,N_{n}}/2 \leq \ell(Q) < r_{n,0}$, this implies that the lower edge of $Q$ is contained on $H$. Hence,
\begin{equation}\label{form13} \mu_{n}(3Q) \geq \mu_{n}(Q) \geq \mu_{n}(Q \cap H) = \Theta \cdot \calH^{1}(Q \cap H) = \Theta \cdot \ell(Q) \sim \frac{m \ell(Q)}{r_{n,0}}. \end{equation}
The very same estimate holds for $\mu$, because the "spacing" $\sigma_{n}$ of the squares in $\calQ_{n + 1}$, which are contained in $\tilde{H}$, satisfies 
\begin{displaymath} \sigma_{n} = \frac{\Delta_{n}}{1000} \leq \frac{r_{n,N_{n}}}{1000} \leq \frac{\ell(Q)}{500}, \end{displaymath}
so many such squares are contained in $Q$. 

 We start proving the estimate \eqref{form11} by summing over the dyadic squares $Q$ with $x \in Q$ and $r_{n,N_{n}}/2 \leq \ell(Q) \leq c r_{n,N_{n} - 1}$, where $c > 0$ is so small that $3Q \cap \tilde{E}_{i} = \emptyset$ for all $1 \leq i \leq N_{1} - 1$. We bound the number $\beta_{2,\mu}^{2}(3Q)$ from above by testing with the line $L = \R \times \{0\}$, and we use the following estimates:
\begin{equation}\label{form23} d(y,L) \sim r_{n,j}, \qquad y \in \tilde{E}_{j} \end{equation}
and 
\begin{equation}\label{form24} d(y,L) \leq r_{n + 1,0}, \qquad y \in \tilde{H}. \end{equation}
Finally, we also need to know that $\mu(3Q \cap \tilde{E}_{i}) \lesssim (\epsilon/2^{n}) \ell(Q)$ and $\mu(3Q \cap \tilde{H}) \lesssim \Theta \ell(Q)$, which follow from the same (trivial) estimates for $\mu_{n}$, and the fact that the "spacing" $\sigma_{n}$ is small enough compared to $\ell(Q)$. Putting all this information together gives
\begin{align} \beta_{2,\mu}^{2}(3Q) & \lesssim \frac{r_{n,0}}{m\ell(Q)} \int_{3Q \cap \spt \mu} \left(\frac{d(y,L)}{\ell(Q)}\right) \, d\mu(y) \label{form21}\\
& \lesssim \frac{r_{n,0}}{m\ell(Q)} \left[ \int_{3Q \cap \tilde{H}} \left(\frac{r_{n + 1,0}}{\ell(Q)} \right)^{2} \, d\mu(y) + \sum_{i = 1}^{N_{n}} \int_{3Q \cap \tilde{E}_{i}} \left(\frac{r_{n,i}}{\ell(Q)}\right)^{2} \, d\mu(y) \right]. \notag \end{align}
Since $3Q \cap \tilde{E}_{j} = \emptyset$ for $1 \leq j \leq N_{n} - 1$, the estimate further becomes
\begin{displaymath} \beta_{2,\mu}^{2}(3Q) \lesssim \frac{r_{n,0}}{m\ell(Q)} \left[ \frac{\Theta r_{n + 1,0}^{2}}{\ell(Q)} + \frac{\epsilon}{2^{n}} \frac{r_{n,N_{n}}^{2}}{\ell(Q)} \right], \end{displaymath}
and consequently, recalling that also $\mu(Q) \gtrsim (m/r_{n,0})\ell(Q)$, 
\begin{align*} \sum_{r_{n,N_{n}}/2 \leq \ell(Q) \leq cr_{n,N_{n} - 1}} & \beta_{2,\mu}^{2}(3Q) \frac{\ell(Q)}{\mu(Q)} 1_{Q}(x)\\
& \lesssim \sum_{r_{n,N_{n}}/2 \leq \ell(Q) \leq cr_{n,N_{n} - 1}} \frac{r_{n,0}^{2}}{m^{2} \ell(Q)} \left[ \frac{\Theta r_{n + 1,0}^{2}}{\ell(Q)} + \frac{\epsilon}{2^{n}} \frac{r_{n,N_{n}}^{2}}{\ell(Q)} \right]1_Q(x)\\
& \lesssim \frac{r^{2}_{n,0}}{m^{2}} \cdot \left[\frac{\Theta r_{n + 1,0}^{2}}{r_{n,N_{n}}^{2}} + (\epsilon/2^{n}) \right]. \end{align*}
This expression looks rather complicated, but the rapid decay \eqref{form9} of the sequence $(r_{n,0})_{n \in \N}$ guarantees that the expression in brackets can be bounded by $1$, so that the whole sum is bounded by $\lesssim r_{n,0}^{2}/m^{2}$. 

Next, we perform a similar estimate for those dyadic squares $Q \ni x$ such that $c r_{n,j} \leq \ell(Q) \leq c r_{n,j - 1}$ for some $1 \leq j \leq N_{n} - 1$. This is fairly similar to the previous bound: the upper bound $\ell(Q) \leq c r_{n,j - 1}$ guarantees that $3Q \cap \tilde{E}_{i} = \emptyset$ for $1 \leq i \leq j - 1$, and hence the sum in the $\beta$-number estimate \eqref{form21} only counts the numbers $r_{n,i}$ with $i \geq j$. This yields
\begin{displaymath} \beta_{2,\mu}^{2}(3Q) \lesssim \frac{r_{n,0}}{m\ell(Q)} \left[ \frac{\Theta r_{n + 1,0}^{2}}{\ell(Q)} + \frac{\epsilon}{2^{n}} \frac{r_{n,j}^{2}}{\ell(Q)} \right], \end{displaymath}
and consequently 
\begin{displaymath} \sum_{cr_{n,j} \leq \ell(Q) \leq cr_{n,j - 1}} \beta_{2,\mu}^{2}(3Q) \frac{\ell(Q)}{\mu(Q)} 1_{Q}(x) \lesssim \frac{r^{2}_{n,0}}{m^{2}} \cdot \left[\frac{\Theta r_{n + 1,0}^{2}}{r_{n,j}^{2}} + (\epsilon/2^{n}) \right]. \end{displaymath}
As before, we simply estimate this by $\lesssim r_{n,0}^{2}/m^{2}$. Putting the various intervals of $\ell(Q)$ together gives
\begin{equation}\label{form25} \sum_{r_{n,N_{n}}/2 \leq \ell(Q) \leq c r_{n,0}} \beta_{2,\mu}^{2}(3Q) \frac{\ell(Q)}{\mu(Q)} 1_{Q}(x) \lesssim \frac{N_{n} r_{n,0}^{2}}{m^{2}} \leq \frac{2^{n - 2}r_{n,0}}{\epsilon m(n - 1)} \leq \frac{\epsilon}{2^{n}} \end{equation}
by the decay asumption \eqref{form9}. This nearly completes the case $x \in \tilde{H}$, except for the dyadic squares $Q \ni x$ with $c r_{n,0} \leq \ell(Q) < r_{n,0}$. They satisfy $\mu(3Q) \geq \mu(Q) \sim m$, and the $\beta$-number $\beta_{2,\mu}^{2}(3Q)$ satisfies
\begin{displaymath} \beta_{2,\mu}^{2}(3Q) \lesssim \left(\frac{r_{n + 1,0}}{r_{n,0}} \right)^{2} + \frac{\epsilon}{2^{n}} \leq 1, \end{displaymath}
reviewing \eqref{form21} in this case. Consequently,
\begin{equation}\label{form26} \sum_{cr_{n,0} \leq \ell(Q) < r_{n,0}} \beta_{2,\mu}^{2}(3Q) \frac{\ell(Q)}{\mu(Q)} \, 1_{Q}(x) \lesssim \frac{r_{n,0}}{m} \leq \frac{\epsilon}{2^{n}}, \end{equation}
as desired. This completes the proof of the case $x \in \tilde{H}$. 

\subsubsection{The case where $x \in \tilde{E}_{j}$ for some $1 \leq j \leq N_{n}$} Let $Q$ be the dyadic square of side-length $r_{n,N_{n}}/2 \leq \ell(Q) < r_{n,0}$ containing $x$. Now the proof of \eqref{form11} splits into three sub-cases: (a) $r_{n,N_{n}}/2 \leq \ell(Q) < r_{n,j}$, (b) $\ell(Q) = r_{n,j}$, and (c) $r_{n,j} < \ell(Q) < r_{n,0}$. The estimates needed for (c) are very familiar from the previous case, and we will only sketch them briefly at the end. The cases (a) and (b) present new phenomena, and we start with (a).  

In (a), since $\ell(Q) \leq r_{n,j}$ and $r_{n,j}$ is a dyadic number, there are two possibilities: the $y$-coordinate of every point in $Q$ is $\geq r_{n,j}$, or the $y$-coordinate of every point in $Q$ is $< r_{n,j}$. But $x \in Q$ has $y$-coordinate $\geq r_{n,j}$ (since $\tilde{E}_{j}$ is entirely contained "on top" of the line $E_{j}$), so the former possibility holds. This, and $\ell(Q) \leq r_{n,j}/2$, imply that 
\begin{equation}\label{form22} 3Q \cap \spt \mu \subset \tilde{E}_{j}. \end{equation}
It follows, heuristically, that the support of $\mu$ looks extremely flat in the square $3Q$ with $\ell(3Q) \geq r_{n,N_{n}} \gg r_{n + 1,0} = \text{ width}(\tilde{E}_{j})$. 

To be precise, observe first that the lower edge of $Q$ is contained on $E_{j}$, which follows from $x \in Q$ (which forces $Q$ to lie at distance $\sim r_{n + 1,0}$ to $E_{j}$) and $\ell(Q) \geq r_{n,N_{n}}/2$ (which implies that the side-length of $Q$ is far larger than $d(Q,E_{j})$. Since the "spacing" $\sigma_{n} \leq r_{n,N_{n}}/1000$ of the squares of $\calQ_{n + 2}$ inside $\tilde{E}_{j}$ is far smaller than $\ell(Q)$, this implies that
\begin{displaymath} \mu(3Q) \geq \mu(Q) \sim \mu_{n}(Q) = \mu_{n}(Q \cap E_{j}) \sim \frac{\epsilon \ell(Q)}{2^{n}}. \end{displaymath}
The converse inequality also holds by \eqref{form22}. Hence, if $L$ is the line containing $E_{j}$, we have
\begin{displaymath} \beta_{2,\mu}^{2}(3Q) \lesssim \frac{2^{n}}{\epsilon \ell(Q)} \int_{\tilde{E}_{j} \cap 3Q} \left(\frac{d(y,L)}{\ell(3Q)} \right)^{2} \, d\mu(y) \lesssim \left(\frac{r_{n + 1,0}}{\ell(Q)}\right)^{2}, \end{displaymath}
and consequently
\begin{displaymath} \sum_{r_{n,N_{n}}/2 \leq \ell(Q) < r_{n,j}} \beta_{2,\mu}^{2}(3Q) \frac{\ell(Q)}{\mu(Q)} 1_{Q}(x) \lesssim \frac{2^{n} r_{n + 1,0}^{2}}{\epsilon} \sum_{r_{n,N_{n}}/2 \leq \ell(Q) < r_{n,j}}\frac{1_Q(x)}{\ell(Q)^{2}} \lesssim \frac{2^{n} r_{n + 1,0}^{2}}{\epsilon r_{n,N_{n}}^{2}}. \end{displaymath}
Assuming the rapid decay \eqref{form9}, the right hand side is smaller than $\epsilon/2^{n}$, and this case is ready. 

The next case is (b), namely that $x \in Q$ and $\ell(Q) = r_{n,j}$. Exactly as in the previous case, these conditions imply that the $y$-coordinate of every point in $Q$ is $\geq r_{n,j}$, and the lower edge of $Q$ is contained on $E_{j}$. The difference to the previous case is now that $3Q$ sees $H$: since $r_{n,j} = \ell(Q)$ is a dyadic number, and the lower edge of $Q$ lies on $E_{j}$ we have $\calH^{1}(3Q \cap H) \sim \ell(Q)$. This implies that
\begin{displaymath} \mu(3Q) \sim \mu_{n}(3Q) \sim \mu_{n}(3Q \cap H) \sim \Theta \ell(Q) \sim \frac{m\ell(Q)}{r_{n,0}} = \frac{m r_{n,j}}{r_{n,0}}. \end{displaymath}
The square $Q$ itself does not meet $H$, so we will need to make do with a weaker lower bound for $\mu(Q)$:
\begin{displaymath} \mu(Q) \sim \mu_{n}(Q) \geq \mu_{n}(Q \cap E_{j}) = \frac{\epsilon r_{n,j}}{2^{n}}. \end{displaymath}
To estimate the number $\beta^{2}_{2,\mu}(3Q)$, we use the line $L = \R \times \{0\}$. Observing that $3Q \cap \tilde{E}_{i} = \emptyset$ for $1 \leq i \leq j - 1$, and that $\mu(3Q \cap \tilde{E}_{i}) \lesssim (\epsilon/2^{n}) r_{n,j}$ for $j \leq i \leq N_{n}$, and recalling the distance estimates \eqref{form23}-\eqref{form24}, we have
\begin{align*} \beta_{2,\mu}^{2}(3Q) & \lesssim \frac{r_{n,0}}{mr_{n,j}}  \left[ \int_{3Q \cap \tilde{H}} \left(\frac{d(y,L)}{r_{n,j}}\right)^{2} \, d\mu(y)+ \sum_{i = j}^{N_{n}} \int_{3Q \cap \tilde{E}_{i}} \left(\frac{d(y,L)}{r_{n,j}} \right)^{2} \, d\mu(y) \right]\\
& \lesssim \frac{r_{n,0}}{mr_{n,j}} \left[\frac{m r_{n,j}}{r_{n,0}} \left(\frac{r_{n + 1,0}}{r_{n,j}}\right)^{2} + \frac{\epsilon}{2^{n}} \sum_{i = j}^{N_{n}} \frac{r_{n,i}^{2}}{r_{n,j}} \right] \sim \left(\frac{r_{n + 1,0}}{r_{n,j}}\right)^{2} + \frac{\epsilon r_{n,0}}{m 2^{n}}. \end{align*}
Thus,
\begin{displaymath} \sum_{\ell(Q) = r_{n,j}} \beta_{2,\mu}^{2}(3Q) \frac{\ell(Q)}{\mu(Q)} 1_{Q}(x) \lesssim \left[\left(\frac{r_{n + 1,0}}{r_{n,j}}\right)^{2} + \frac{\epsilon r_{n,0}}{m(n - 1)2^{n}}\right] \cdot \frac{2^{n}}{\epsilon} \leq \frac{\epsilon}{2^{n}} \end{displaymath}
by the rapid decay assumption \eqref{form9}. This case is complete. 

We arrive at case (c), where $r_{n,j} < \ell(Q) < r_{n,0}$. In particular, $\ell(Q) \geq 2r_{n,j}$. Since $x \in \tilde{E}_{j}$, we have $x = (t,h)$ with $t \in [0,r_{n,0})$
\begin{displaymath} h \leq r_{n,j} + r_{n + 1,0} < 2r_{n,j} \leq \ell(Q). \end{displaymath}
As $x \in Q$, this forces the lower edge of $Q$ to lie on $H$, and we have the good lower bounds \eqref{form13} for both $\mu(Q)$ and $\mu(3Q)$. The rest of case (c) is exactly the same as the proof of the bounds \eqref{form25} and \eqref{form26} in the case $x \in \tilde{H}$, and we do not repeat the details. The proof of the case $x \in \tilde{E}_{j}$ is now complete. 

\subsubsection{Conclusion of the proof of the $\beta$-number estimate} We have now proved the estimate \eqref{form110}, valid for all $x \in \spt \mu$ and $n \geq 1$. This gives
\begin{displaymath} \sum_{\ell(Q) \leq 1/2} \beta_{2,\mu}^{2}(3Q) \frac{\ell(Q)}{\mu(Q)}1_{Q} = \sum_{n = 1}^{\infty} \sum_{r_{n,N_{n}}/2 \leq \ell(Q) < r_{n - 1,N_{n - 1}}/2} \beta^{2}_{2,\mu}(3Q)\frac{\ell(Q)}{\mu(Q)} 1_{Q}(x) \lesssim \epsilon. \end{displaymath} 
So, it remains to prove the same estimate for $Q$ with $x \in Q$ and $\ell(Q) \geq 1/2$, which is straightforward. If $x \in Q$ and $\ell(Q) \geq 1/2$, then $\mu(Q) = 1 = \mu(3Q)$ (at least if $r_{1,0} < 1/2$, which we may assume). We use the line $L = \R \times \{0\}$ to estimate the $\beta$-number. The notation $\tilde{E}_{j}$ and $\tilde{H}$ should be self-explanatory by now:
\begin{displaymath} \beta_{2,\mu}^{2}(3Q) \leq \int_{\tilde{H}} \left(\frac{r_{2,0}}{\ell(Q)^{2}} \right)^{2} \, d\mu + \sum_{j = 1}^{N_{1}} \int_{\tilde{E}} \left(\frac{r_{1,j}}{\ell(Q)} \right)^{2} \, d\mu \lesssim \left(\frac{r_{2,0}}{\ell(Q)} \right)^{2} + \left(\frac{r_{1,0}}{\ell(Q)} \right)^{2}.  \end{displaymath}
Thus,
\begin{displaymath} \sum_{\ell(Q) > 1/2} \beta_{2,\mu}^{2}(3Q) \frac{\ell(Q)}{\mu(Q)} 1_{Q}(x) \leq \sum_{\ell(Q) > 1/2} \left(\frac{r_{2,0}^{2}}{\ell(Q)} + \frac{r_{1,0}^{2}}{\ell(Q)}\right) 1_{Q}(x) \sim r_{2,0}^{2} + r_{1,0}^{2} \leq \epsilon, \end{displaymath} 
if $r_{1,0} \leq \epsilon/2$, which we may assume. This completes the proof of the $\beta$-number estimate \eqref{form27}. 

\section{Proof of the continuous case: Theorem \ref{main3}}
In Theorem \ref{main3}, the measure $\mu$ is precisely the same as in Theorem \ref{main}, so we do not repeat the argument for claim (ii). Before starting the proof of (i), we briefly recall how $\mu$ looks like:
\begin{displaymath} \mu = \sum_{j = 1}^{N} \epsilon' \cdot \calH^{1}|_{E_{j}} + \Theta \cdot \calH^{1}|_{H}, \end{displaymath}
where $\epsilon' = c\epsilon$ for some small absolute constant $c > 0$, $E_{j} = [0,r_{0}] \times \{r_{j}\}$, $H = [0,r_{0}] \times \{0\}$,
\begin{displaymath} r_{0} = (\epsilon')^{3}, \quad N = \frac{1 - \epsilon'}{\epsilon' r_{0}} \quad \text{and} \quad \Theta = \frac{\epsilon'}{r_{0}}. \end{displaymath}
As before, we assume without loss of generality that $(1 - \epsilon')/(\epsilon'r_{0})$ is an integer. For this proof, it is also convenient to define
\begin{displaymath} E_{0} = [0,r_{0}] \times \{r_{0}\}. \end{displaymath}
The sequence $(r_{j})_{j = 1}^{N}$ is assumed to be rapidly decreasing enough, where the rate of decay will depend on $\epsilon$. Some aspects of the construction are depicted in Figure \ref{fig1}.
\begin{figure}[h!]
\begin{center}
\includegraphics[scale = 0.7]{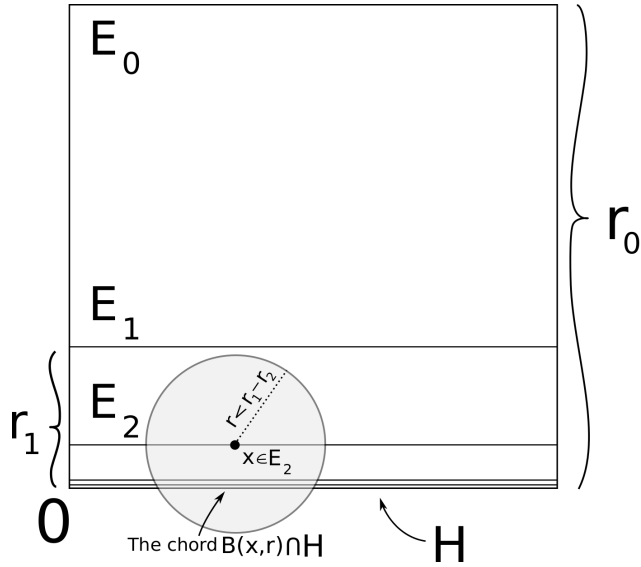}
\caption{The measure $\mu$ and various related objects.}\label{fig1}
\end{center}
\end{figure}
We will now prove (i), namely that
\begin{displaymath} \int_{0}^{\infty} \beta^{2}_{2,\mu}(B(x,r)) \frac{dr}{\mu(B(x,r))} \leq \epsilon \quad \text{for all } x \in \spt \mu. \end{displaymath}
First, suppose $x \in H$ and $r > 0$. If $r < r_N$, we have that $\beta^{2}_{2,\mu}(B(x,r)) = 0$. 
Another easy case is $r \ge r_0$, and we handle it now. Then we have $\mu(B(x,r)) \gtrsim 1$ and, testing with the line $L = \R \times \{0\}$,
\begin{equation}\label{form1}
\beta^{2}_{2,\mu}(B(x,r)) \le \epsilon' \sum_{j=1}^N \int_{B(x,r) \cap E_j} \Big( \frac{r_j}{r}\Big)^2 d\calH^1(y) \lesssim \frac{\epsilon' r_0^3}{r^2},
\end{equation}
so that
$$
\int_{r_0}^{\infty} \beta^{2}_{2,\mu}(B(x,r)) \frac{dr}{\mu(B(x,r))} \lesssim \epsilon' r_0^3 \int_{r_0}^{\infty} r^{-2}\,dr = \epsilon' r_{0}^{2} = (\epsilon')^{7}.
$$
Let now $r \in [r_N, r_0)$, and specifically $r \in [r_{j + 1}, r_j)$ for some $j \in {0, \ldots, N-1}$.
We have $\mu(B(x,r)) \gtrsim \epsilon' r_0^{-1} r$ and, observing that $B(x,r) \cap E_{j} = \emptyset$,
\begin{equation}\label{form3}
\mu(B(x,r)) \beta^{2}_{2,\mu}(B(x,r)) \le \epsilon' \sum_{i=1}^N \int_{B(x,r) \cap E_i} \Big( \frac{r_i}{r}\Big)^2 d\calH^1(y) \lesssim \frac{\epsilon'}{r} \sum_{i:\, r \ge r_i} r_i^2 \lesssim \frac{\epsilon'}{r}r_{j+1}^2,
\end{equation}
so that
$$
\int_{r_{j+1}}^{r_j} \beta^{2}_{2,\mu}(B(x,r)) \frac{dr}{\mu(B(x,r))} \lesssim \int_{r_{j+1}}^{r_j} \frac{r_0^2}{(\epsilon' r)^2} \frac{\epsilon'}{r}r_{j+1}^2 \,dr \lesssim \frac{r_0^2}{\epsilon'}.
$$
This yields
$$
\int_{r_N}^{r_0} \beta^{2}_{2,\mu}(B(x,r)) \frac{dr}{\mu(B(x,r))} \lesssim N \frac{r_0^2}{\epsilon'} \lesssim \frac{r_0}{(\epsilon')^2} = \epsilon'.
$$
Combining the above estimates gives
$$
\int_0^{\infty} \beta^{2}_{2,\mu}(B(x,r)) \frac{dr}{\mu(B(x,r))} \le \epsilon, \qquad x \in H,
$$
by choosing $c$ in $\epsilon' = c\epsilon$ small enough.

Let then $x \in E_{j_{0}}$, $j_{0} = 1, \ldots, N$. We split
\begin{align*}
\int_0^{\infty} & \beta^{2}_{2,\mu}(B(x,r)) \frac{dr}{\mu(B(x,r))} \\
&= \Big(\int_0^{r_{j_{0}} - r_{j_{0}+1}}
+ \int_{r_{j_{0}} - r_{j_{0} + 1}}^{r_{j_{0} - 1} - r_{j_{0}}}
+ \int_{r_{j_{0} - 1} - r_{j_{0}}}^{r_{0} - r_{j_{0}}}
+ \int_{r_{0} - r_{j_{0}}}^{\infty}
\Big)  \beta^{2}_{2,\mu}(B(x,r)) \frac{dr}{\mu(B(x,r))} \\
&= I + II + III + IV,
\end{align*}
and further
$$
III = \sum_{j=1}^{j_{0}-1} \int_{r_{j} - r_{j_{0}}}^{ r_{j - 1} - r_{j_{0}}} \beta^{2}_{2,\mu}(B(x,r)) \frac{dr}{\mu(B(x,r))} = \sum_{j=1}^{j_{0}-1} III_j.
$$

(note that $III = 0$ trivally if $j_{0} = 1$, so we may assume that $j_{0} \geq 2$ while treating $III$). If $0 < r < r_{j_{0}} - r_{j_{0}+1}$, then $B(x,r)$ intersects only one of the line segments of $\spt \mu$, and $\beta_{2,\mu}^{2}(B(x,r)) = 0$. Thus $I = 0$.

Another easy case is, when $r \geq r_{0} - r_{j_{0}}$, which gives the same result as the case $r \geq r_{0}$ above, again using $\mu(B(x,r)) \gtrsim 1$, testing with the line $L = \R \times \{0\}$, and repeating the computation from \eqref{form1}. We get $IV \lesssim \epsilon' r_0^2 = (\epsilon')^{7}$.

The case $II$ is the hardest. So we first deal with $III$ by handling a fixed term $III_j$ for some $1 \le j \le j_0-1$, in particular
\begin{equation}\label{form2} r_{j} - r_{j_{0}} < r < r_{j - 1} - r_{j_{0}}. \end{equation}
We will use the following formula for the length of a \emph{chord}: if $S(z,R_{2})$ is any circle of radius $R_{2} > 0$, and $J$ is a chord at distance $0 < R_{1} \leq R_{2}$ from $z$, then
\begin{equation}\label{form5} \calH^{1}(J) = 2\sqrt{R_{2}^{2} - R_{1}^{2}} \sim \sqrt{R_{2}} \sqrt{R_{2} - R_{1}}. \end{equation} 
Applying this to the chord $B(x,r) \cap H$ of the circle $S(x,r)$, we obtain
\begin{displaymath} \mu(B(x,r)) \geq \mu(B(x,r) \cap H) = \Theta \cdot \calH^{1}(B(x,r) \cap H) \sim \frac{\epsilon'}{r_{0}} \sqrt{r} \sqrt{r - r_{j_{0}}} \sim \frac{\epsilon' r}{r_{0}}. \end{displaymath} 
Here the equivalence $r - r_{j_{0}} \sim r$ follows from the left hand side of \eqref{form2}, which implies that
\begin{displaymath} r - 2r_{j_{0}} \geq r_{j_{0} - 1} - r_{j_{0}} - 2r_{j_{0}} \geq 0, \end{displaymath}
or equivalently $r - r_{j_{0}} \geq r/2$. Thus,
\begin{displaymath} \int_{r_{j} - r_{j_{0}}}^{r_{j - 1} - r_{j_{0}}} \beta_{2,\mu}^{2}(B(x,r)) \frac{dr}{\mu(B(x,r))} \lesssim \int_{r_{j/2}}^{r_{j - 1} - r_{j_{0}}} \frac{r_{0}^{2}}{(\epsilon'r)^{2}} \cdot [\mu(B(x,r))\beta_{2,\mu}^{2}(B(x,r)) ] \, dr, \end{displaymath}
and whenever $0 < r < r_{j - 1} - r_{j_{0}}$, repeating the computation from \eqref{form3} yields
\begin{displaymath} \mu(B(x,r))\beta^{2}_{2,\mu}(B(x,r)) \lesssim \frac{\epsilon'}{r}r_{j}^{2}. \end{displaymath}
Altogether, 
\begin{displaymath} III_j = \int_{r_{j} - r_{j_{0}}}^{r_{j - 1} - r_{j_{0}}} \beta_{2,\mu}^{2}(B(x,r)) \frac{dr}{\mu(B(x,r))} \lesssim \frac{r_{0}^{2}}{\epsilon'}, \end{displaymath}
and summing this estimate for $1 \leq j  \leq j_{0}-1$ gives
\begin{displaymath} III = \sum_{j=1}^{j_{0}-1} III_j \lesssim \frac{r_{0}}{(\epsilon')^{2}} = \epsilon'. \end{displaymath}

It remains to handle the case $II$, where
\begin{equation}\label{form4} r_{j_{0}} - r_{j_{0} + 1} < r \leq r_{j_{0} - 1} - r_{j_{0}}. \end{equation}
We begin with the former task, so fix $r$ as in \eqref{form4}. Observe that
\begin{displaymath} \int_{r_{j_{0}} - r_{j_{0} + 1}}^{r_{j_{0}} - r_{j_{0} + 1} + (\epsilon')^{2}r_{j_{0}}} \beta_{2,\mu}^{2}(B(x,r)) \frac{dr}{\mu(B(x,r))} \leq  \int_{r_{j_{0}} - r_{j_{0} + 1}}^{r_{j_{0}} - r_{j_{0} + 1} + (\epsilon')^{2}r_{j_{0}}} 1 \cdot \frac{dr}{\epsilon' r} \lesssim \frac{(\epsilon')^{2}r_{j_{0}}}{\epsilon' r_{j_{0}}} = \epsilon'. \end{displaymath}
It remains to handle the integration over the range 
\begin{displaymath} r_{j_{0}} - r_{j_{0} + 1} + (\epsilon')^{2}r_{j_{0}} < r < r_{j_{0} - 1} - r_{j_{0}}. \end{displaymath}
Choose $r$ on this interval, and note that, by requiring the decay of the sequence $(r_{i})$ to be so rapid that $r_{j_{0} + 1} < (\epsilon')^{2}r_{j_{0}}/2$, we have
\begin{equation}\label{form6}  r_{j_{0}} + \frac{(\epsilon')^{2}r_{j_{0}}}{2} \leq r < r_{j_{0} - 1} - r_{j_{0}}. \end{equation}
We wish to find a lower bound for $\mu(B(x,r))$, and this is accomplished with the aid of the chord length estimate \eqref{form5}. Namely, repeating an earlier estimate,
\begin{displaymath} \mu(B(x,r)) \geq \mu(B(x,r) \cap H) = \Theta \cdot \calH^{1}(B(x,r) \cap H) \sim \frac{\epsilon'}{r_{0}}\sqrt{r}\sqrt{r - r_{j_{0}}}. \end{displaymath}
Note that the left hand side of \eqref{form6} implies that
\begin{displaymath} r - r_{j_{0}} \gtrsim (\epsilon')^{2}r, \end{displaymath}
because this holds at $r = r_{j_{0}} + (\epsilon')^{2}r_{j_{0}}/2$, the left endpoint of our interval, and the derivative of $r \mapsto  r - r_{j_{0}} - (\epsilon')^{2}r$ is positive. Hence,
\begin{displaymath} \mu(B(x,r)) \gtrsim \frac{(\epsilon')^{2}r}{r_{0}}. \end{displaymath}
Next, we essentially repeat the computation from \eqref{form3}, recalling the upper bound $r < r_{j_{0} - 1} - r_{j_{0}}$, which means that $B(x,r) \cap E_{j_{0} - 1} = \emptyset$:
\begin{displaymath} \mu(B(x,r)) \beta^{2}_{2,\mu}(B(x,r)) \le \epsilon' \sum_{i=1}^N \int_{B(x,r) \cap E_i} \Big( \frac{r_i}{r}\Big)^2 d\calH^1(y) \lesssim \frac{\epsilon'}{r} \sum_{i = j_{0}}^{N} r_i^2 \lesssim \frac{\epsilon'}{r}r_{j_{0}}^2. \end{displaymath}
Combining the estimates above leads to
\begin{displaymath} \int_{r_{j_{0}} - r_{j_{0} + 1} + (\epsilon')^{2}r_{j_{0}}}^{r_{j_{0} - 1} - r_{j_{0}}} \beta_{2,\mu}^{2}(B(x,r)) \frac{dr}{\mu(B(x,r))} \lesssim \frac{r_{0}^{2}}{(\epsilon')^{3}} \int_{r_{j_{0}}}^{r_{j_{0} - 1} - r_{j_{0}}} \frac{r_{j_{0}}^{2}}{r^{3}} \, dr \lesssim \frac{r_{0}^{2}}{(\epsilon')^{3}}. \end{displaymath}
Collecting the estimates we get
$$
II \lesssim \frac{r_{0}^{2}}{(\epsilon')^{3}} + \epsilon' = (\epsilon')^{3} + \epsilon'.
$$

Combining the estimates for the terms $I, \ldots, IV$ we have
$$
\int_0^{\infty} \beta^{2}_{2,\mu}(B(x,r)) \frac{dr}{\mu(B(x,r))} \le \epsilon, \qquad x \in E_{j_{0}},\, j_{0} = 1, \ldots, N,
$$
by choosing $c > 0$ small enough in $\epsilon' = c\epsilon$. The proof of Theorem \ref{main3} is complete. 


\end{document}